\newtheorem{theorem}{Theorem}[section]
\theoremstyle{plain}
\newtheorem{corollary}{Corollary}[section]
\newtheorem{lemma}{Lemma}[section]
\newtheorem{proposition}{Proposition}[section]
\numberwithin{equation}{section}
\theoremstyle{definition}
\theoremstyle{remark}
\newtheorem{remark}{Remark}[section]
\def\pd{\partial}
\def\re{\mathbb{R}}
\def\ti{\tilde}
\def\mbb{\mathbb}
\newcommand{\eqal}[1]{\begin{equation}\begin{aligned}#1\end{aligned}\end{equation}}
\newcommand{\osc}{\mathrm{osc}}
\newcommand{\Dg}{\Delta_g}
\newcommand{\dg}{\nabla_g}
\title[Singularities of the LMCF]{A priori estimates for Singularities of the Lagrangian Mean Curvature Flow with supercritical phase
}
\author{Arunima Bhattacharya and Jeremy Wall}
\address{Department of Mathematics, Phillips Hall\\
 the University of North Carolina at Chapel Hill, NC }
\email{arunimab@unc.edu}
\address{Department of Mathematics, Phillips Hall\\
 the University of North Carolina at Chapel Hill, NC }
\email{jwall2@unc.edu}
\begin{document}


\begin{abstract}
In this paper, we prove interior a priori estimates for singularities of the Lagrangian mean curvature flow assuming the Lagrangian phase is supercritical. We prove a Jacobi inequality that holds good when the Lagrangian phase is critical and supercritical. We further extend our results to a broader class of Lagrangian mean curvature type equations.

\end{abstract}

\maketitle

\section{Introduction}
A family of Lagrangian submanifolds $X(x,t):\re^n\times\re\to\mbb C^n$ evolves by \textit{Lagrangian mean curvature flow} if it solves
\eqal{
\label{LMCF}
(X_t)^\bot=\Delta_gX=\vec H,
} 
where $\vec H $ denotes the mean curvature vector of the Lagrangian submanifold. After a change of co-ordinates, one can locally write $X(x,t)=(x,Du(x,t))$ such that $\Delta_gX=J\nabla_g\Theta$ (see \cite{HL})  where $\Theta$ is the Lagrangian angle given by
\begin{equation}
    \Theta=\sum_{i=1}^n\arctan\lambda_i \label{sl}
\end{equation}
with $\lambda_i$ denoting the eigenvalues of the Hessian $D^2u$. Here $g=I_n+(D^2u)^2$ is the induced metric on $(x,Du(x))$ and $J$ is the almost complex structure on $\mathbb C^n$. This
results in a local potential $u(x,t)$ evolving by the parabolic equation
\begin{align*}
&u_t=\Theta,\\
&u(x,0):=u(x).
\end{align*}


Symmetry reductions of the Lagrangian mean curvature flow reduce the above local parabolic representation to an elliptic equation for $u(x)$, which models singularities of the mean curvature flow (see Chau-Chen-He \cite{CCH}). If $u(x)$ solves
\eqal{
\label{s}
\sum_{i=1}^n\arctan\lambda_i=s_1+s_2(x\cdot Du(x)-2u(x)),
}
then $X(x,t)=\sqrt{1-2s_2t}\,(x,Du(x))$ is a \textit{shrinker} or \textit{expander} solution of \eqref{LMCF}, if $s_2>0$ or $s_2<0$ respectively.  If $u(x)$ solves
\eqal{
\label{tran}
\sum_{i=1}^n\arctan\lambda_i=t_1+t_2\cdot x+t_3\cdot Du(x),
}
then $X(x,t)=(x,Du(x))+t(-t_3,t_2)$ is a \textit{translator} solution of \eqref{LMCF}. If $u(x)$ solves
\eqal{
\label{rotator}
\sum_{i=1}^n\arctan\lambda_i=r_1+\frac{r_2}{2}(|x|^2+|Du(x)|^2),
}
then $X(x,t)=\exp(r_2tJ)(x,Du(x))$ is a \textit{rotator} solution of \eqref{LMCF}. A broader class of equations  of interest that generalize equations \eqref{s}, \eqref{tran}, \eqref{rotator}, among others, are the 
\textit{Lagrangian mean curvature type equations}
\eqal{
\label{slag}
\sum_{i=1}^n\arctan\lambda_i=\Theta(x,u(x),Du(x)).
}
See \cite{Y20, BS2} for a detailed discussion.

\noindent \textbf{Notations. }Before we present our main results, we clarify some terminology.
\begin{itemize}
\item[I.] By $B_R$ we denote a ball of radius $R$ centered at the origin.
\item[II.] We denote the oscillation of $u$ in $B_R$ by $\osc_{B_R}(u)$.
\item[III.] Let $\Gamma_R = B_R\times u(B_R)\times Du(B_R)\subset B_R\times\re\times\re^n$. Let $\nu_1,\nu_2$ be constants such that for $\Theta(x,z,p)$, we have the following structure conditions
\begin{align}
    |\Theta_x|,|\Theta_z|,|\Theta_p|&\leq \nu_1,\label{struct}\\
    |\Theta_{xx}|,|\Theta_{xz}|,|\Theta_{xp}|,|\Theta_{zz}|,|\Theta_{zp}| &\leq \nu_2 \nonumber
\end{align}
for all $(x,z,p)\in\Gamma_R$. In the above partial derivatives, the variables $x,z,p$ are treated as independent of each other. Observe that this indicates that the above partial derivatives do not have any $D^2u$ or $D^3u$ terms.
 \end{itemize}
\medskip

Our main result is the following: we prove interior Hessian estimates for shrinkers, expanders, translators, and rotators of the Lagrangian mean curvature flow and further extend these results to the broader class of Lagrangian mean curvature type equations, under the assumption that the Lagrangian phase is supercritical, i.e. $|\Theta|\geq (n-2)\frac{\pi}{2}+\delta$.

\begin{theorem}\label{main0}
If $u$ is a smooth solution of any of these equations: \eqref{s}, \eqref{tran}, and \eqref{rotator} on $B_{R}(0)\subset \mathbb{R}^{n}$ where  $n\geq 2$ and $|\Theta|\geq (n-2)\frac{\pi}{2}+ \delta$, then we have 
\[
|D^2u(0)|\leq \exp\big[C_1\csc^{9n-6}(\delta/2)\big]\exp\bigg[C_2\csc^{9n-6}(\delta/2)\big(\osc_{B_{R}}(u)/R^2\big)^{4n-2}\bigg]
\]
where $C_1$ and $C_2$ are positive constants depending on $n$ and the following: $s_2$ for \eqref{s}, $t_2,t_3$ for \eqref{tran}, and $r_2$ for \eqref{rotator}.
\end{theorem}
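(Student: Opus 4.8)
The plan is to follow the now-classical strategy for interior Hessian estimates for the special Lagrangian equation, originating in Warren--Yuan and adapted to the prescribed-phase and flow-soliton settings by Bhattacharya and others. The core object is the conformal metric $g = I_n + (D^2u)^2$ on the Lagrangian graph $(x, Du(x))$, together with the subharmonicity properties that the supercritical phase condition $|\Theta| \geq (n-2)\frac{\pi}{2} + \delta$ buys us. Concretely, I would first record the geometric identities: the volume element $V = \sqrt{\det g} = \prod \sqrt{1+\lambda_i^2}$, the slope function $b = \log V$, and the fact that, because each $\lambda_i$ satisfies $|\arctan\lambda_i| < \pi/2$ while their sum is bounded away from $(n-2)\pi/2$, at most one eigenvalue can be large and negative — hence $g$ is uniformly comparable to a metric controlled by the largest eigenvalue, and $\Delta_g$ has good ellipticity. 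The target quantity is $\log\sqrt{1+\lambda_{\max}^2}$, or more precisely a smooth proxy like $b + $ (test function), for which one derives a Jacobi-type differential inequality $\Delta_g b \geq c(\delta)\, |\nabla_g b|^2 - (\text{lower order})$, where the lower-order terms now carry the extra contributions coming from the right-hand sides of \eqref{s}, \eqref{tran}, \eqref{rotator} — these are precisely the $\Theta_x, \Theta_z, \Theta_p$ terms controlled by the structure constants $\nu_1,\nu_2$ from \eqref{struct} (with $z \cdot p$, $x$, etc. substituted in). The paper's abstract promises exactly this Jacobi inequality valid in the critical and supercritical range, so I would invoke it here.

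Next, I would set up the integral (Michael--Simon / mean value) machinery. One tests the Jacobi inequality against a cutoff $\varphi^2 b$ (or a power thereof) and integrates by parts on the submanifold, using the Sobolev inequality on minimal-type submanifolds (the submanifold here is not minimal, but the mean curvature $\vec H = J\nabla_g \Theta$ is controlled by $\nu_1$ via the structure conditions, so the Michael--Simon Sobolev inequality still applies with a harmless correction). The point of the supercriticality is twofold: it gives the Jacobi inequality the correct sign, and it lets one bound $|Du|$ in terms of $\osc_{B_R}(u)$ after a rescaling — since $u$ is only determined up to adding an affine function and a constant, one normalizes $u \geq 0$, $\min_{B_R} u = 0$, and then interior gradient bounds (from the divergence structure of the equation together with the a priori phase bound) give $|Du| \lesssim \osc_{B_R}(u)/R$ on a smaller ball. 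This is where the $\osc_{B_R}(u)/R^2$ scaling-invariant quantity enters: after parabolic-type rescaling $x \mapsto x/R$, $u \mapsto u/R^2$, the equation is preserved (the coefficients $s_2, t_2, t_3, r_2$ scale, which is why they appear in $C_1, C_2$), and $\osc_{B_R}(u)/R^2$ is the natural dimensionless datum.

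The heart of the argument is then a Moser iteration on the quantity $b$: combining the Jacobi inequality, the Sobolev inequality, and a careful choice of cutoffs supported in geodesic balls of the induced metric, one obtains $\sup_{B_{R/2}} b \leq C\big(\text{integral bound on } b\big)$, and the integral bound is in turn controlled by the volume of the graph over $B_R$, which is controlled by $\osc_{B_R}(u)$ and the gradient bound. Tracking the dependence on $\delta$ through every application of the Jacobi inequality — each iteration step loses a factor like $\csc(\delta/2)$ — produces the $\csc^{9n-6}(\delta/2)$ exponents; the doubly-exponential form $\exp[C_1 \csc^{9n-6}] \exp[C_2 \csc^{9n-6}(\osc/R^2)^{4n-2}]$ arises because the Moser iteration yields an exponential of an $L^1$-type norm, and that norm itself is exponential in the oscillation (this exponential-of-oscillation behavior is already present in Warren--Yuan's $n=2,3$ estimates and its higher-dimensional descendants).

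The main obstacle, and where I would spend the most care, is establishing the Jacobi inequality with \emph{explicit} constants depending only on $\csc(\delta/2)$ and the structure constants, uniformly in the three soliton types simultaneously. The right-hand sides of \eqref{s}, \eqref{tran}, \eqref{rotator} differ, and their first and second derivatives in $x$, the height $u$, and the gradient $Du$ all feed into the computation of $\Delta_g \Theta$ and hence into the error terms of the Jacobi inequality; one must verify that each such term is either absorbed by the good gradient term $|\nabla_g b|^2$ (via Cauchy--Schwarz, using that $|\nabla_g x|_g, |\nabla_g Du|_g \leq 1$ on the graph) or is lower-order and contributes only to the additive constant. A secondary difficulty is the bookkeeping of the $\delta$-dependence: the divisor theorem for $\Delta_g$ acting on the Lagrangian phase (the identity controlling $\sum \frac{1}{1+\lambda_i^2}$ type sums from below by $c(\delta) > 0$ when the phase is supercritical) must be quantified sharply, as any slack there degrades the final exponent. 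Once the Jacobi inequality is in hand with the stated constant dependence, the Moser iteration and the reduction to $\osc_{B_R}(u)/R^2$ are essentially routine, modulo careful tracking of powers.
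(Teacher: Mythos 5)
Your proposal captures the macroscopic shape of the argument — Jacobi inequality for the slope of the Lagrangian graph, integral form via cutoffs, then a mean-value/Sobolev iteration to pass from $L^2$-control of $\nabla_g b$ to a pointwise bound on $b(0)$ — and the remark that the structure constants $\nu_1,\nu_2$ absorb the soliton right-hand sides of \eqref{s}, \eqref{tran}, \eqref{rotator} uniformly is correct. But there are two genuine gaps that prevent the sketch from closing, and they are precisely the places where the supercriticality hypothesis $|\Theta|\geq(n-2)\frac{\pi}{2}+\delta$ actually does its work in the paper.

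First, and most importantly, you have no substitute for the Lewy--Yuan rotation, which is the paper's central technical device (Section~\ref{LYr}). In the supercritical range, $u$ is not convex; the smallest eigenvalue of $D^2u$ can approach $-\cot\delta$, so the induced metric $g=I_n+(D^2u)^2$ degenerates only in one direction but is nowhere uniformly comparable to the Euclidean metric. You propose to route around this via Michael--Simon on the graph; but even if the mean curvature is bounded by $\nu_1$, that Sobolev inequality gives you control in the intrinsic geometry, and you would still need an a priori bound on the total volume $\int dv_g$ to turn a supremum-of-subsolution estimate into a bound by $\osc u$, and it is unclear how you would get one without a uniform ellipticity input. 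The paper instead rotates the gradient graph by the complex angle $e^{-i\delta/2}$: by Lemma~\ref{y1}(4), $D^2u\geq-\cot\delta\,I_n$, so after rotation \eqref{dbaru} holds, $g(\bar x)$ is \emph{uniformly} elliptic with ellipticity constants $\csc^n(\delta/2)$, and all the subsequent isoperimetric (Proposition~\ref{Iso}) and mean-value (Step~1 of the proof) machinery runs in the rotated coordinates. That is where every power of $\csc(\delta/2)$ in the final estimate comes from.

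Second, you misattribute the $\delta$-dependence. You posit a Jacobi inequality $\Delta_g b\geq c(\delta)|\nabla_g b|^2-(\text{l.o.t.})$ and claim each iteration loses a $\csc(\delta/2)$. In fact the paper's Jacobi inequality (Lemma~\ref{ptJ}) has constant $c(n)$ depending only on $n$ and is valid already at the critical value $|\Theta|=(n-2)\frac{\pi}{2}$ — a point the paper emphasizes, since this is where the proof diverges from the hypercritical case of \cite{BW}. The $\delta$-dependence instead enters through the rotation (via \eqref{barg} and $\bar R\geq R/(2\cos(\delta/2))$) and through the gradient estimate, which does not come from a divergence-structure argument as you suggest but from the elementary observation that $u+\cot(\delta)\frac{|x|^2}{2}$ is convex (so $\|Du\|_{L^\infty(B_r)}\lesssim\osc_{B_{r+1}}u+\cot\delta$, see \eqref{gradest}). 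Finally, you omit the $n=2$ case, which the paper handles by padding with a fixed third eigenvalue $\arctan\lambda_3=\pi/2-\delta/2$ to reduce to $n=3$; without some such device the Jacobi inequality, which is stated for $n\geq3$, does not apply.
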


In the more general case we have the following result.
\begin{theorem}\label{main1}
If $u$ is a smooth solution of \eqref{slag} on $B_R(0)\subset\re^n$, with $n\geq 2$ and $|\Theta|\geq (n-2)\frac{\pi}{2}+\delta$ where $\Theta(x,z,p)\in C^2(\Gamma_R)$ satisfies \eqref{struct} and is partially convex in $p$, then we have
\[
|D^2u(0)|\leq \exp\big[C_1\csc^{9n-6}(\delta/2)\big]\exp\bigg[C_2\csc^{9n-6}(\delta/2)\big(\osc_{B_{R}}(u)/R^2\big)^{4n-2}\bigg]
\]
where $C_1$ and $C_2$ are positive constants depending on $\nu_1,\nu_2$, and $n$.
\end{theorem}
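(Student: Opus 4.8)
\emph{Outline of the proof.} The plan is to run the Jacobi‑inequality method of Warren and Yuan in the prescribed‑phase setting, tracking every constant through the supercriticality gap $\delta$ and the structure constants $\nu_1,\nu_2$. Let $M=\{(x,Du(x)):x\in B_R\}\subset\re^{2n}$ be the Lagrangian graph, $g=I_n+(D^2u)^2$ the induced metric, $\lambda_{\max}$ the largest eigenvalue of $D^2u$, and $b=\log\sqrt{1+\lambda_{\max}^2}\ge 0$, interpreted in the barrier sense (or via a smooth symmetric proxy for $\lambda_{\max}$); for definiteness I treat the case $\Theta\ge(n-2)\tfrac\pi2+\delta$, the case $\Theta\le-((n-2)\tfrac\pi2+\delta)$ being entirely analogous. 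An elementary estimate for the arctangent shows that the supercritical bound forces at most one eigenvalue of $D^2u$ to be negative, with $\lambda_i>\tan(\delta/2)$ for all but the smallest and $\lambda_n>-\cot\delta\ge-C\csc(\delta/2)$; in particular $D^2u>-\cot\delta\,I_n$, so $u$ plus a quadratic bounded by $C\csc(\delta/2)|x|^2$ is convex and hence $\sup_{B_{R/2}}|Du|\le C\big(\osc_{B_R}(u)/R+R\csc(\delta/2)\big)$. On $M$ the pulled‑back mean curvature is $\vec H=J\,\dg\Theta$; differentiating $\Theta(x,u,Du)$ tangentially gives first derivatives of $u$ contracted against the bounded $\Theta_x,\Theta_z,\Theta_p$, and using $g^{-1}\le I_n$ together with the gradient bound one obtains $|\vec H|\le C(n)\,\nu_1\big(1+\osc_{B_R}(u)/R+R\csc(\delta/2)\big)$ on $B_{R/2}\cap M$. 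The decisive input is the Jacobi inequality proved in the preceding section: under $|\Theta|\ge(n-2)\tfrac\pi2+\delta$ and partial convexity of $\Theta$ in $p$,
\[
\Dg b\ \ge\ c(\delta)\,|\dg b|^2\ -\ C(n,\nu_1,\nu_2)\qquad\text{on }M .
\]
Its proof exploits the eigenvalue confinement above together with the concavity of $\sum\arctan\lambda_i$ on that set, which is what converts the commutation terms appearing in the computation of $\Dg b$ into the favorable term $c(\delta)|\dg b|^2$; partial convexity of $\Theta$ in $p$ is precisely what absorbs the bad third‑order contribution obtained by differentiating the $p=Du$ slot of $\Theta$ twice, while $-C(n,\nu_1,\nu_2)$ collects the remaining lower‑order terms.

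Next I would turn this differential inequality into a pointwise bound for $b(0)$ in two stages. \emph{Stage 1 (reduction to an integral bound).} Since $\Dg b\ge -C(n,\nu_1,\nu_2)$ with $|\vec H|$ controlled, and since the nonnegative term $c(\delta)|\dg b|^2$ makes each $e^{pb}$ a subsolution $\Dg e^{pb}\ge -C(n,\nu_1,\nu_2)\,p\,e^{pb}$, the Michael--Simon mean‑value inequality together with a Moser iteration on the $n$‑dimensional submanifold $M$ — driven by the Michael--Simon Sobolev inequality, the non‑minimality contributing only the controlled $\vec H$‑terms and the term $c(\delta)|\dg b|^2$ making the iteration close — reduces the estimation of $b(0)$ to bounding a normalized integral $\operatorname{vol}_g(B_R\cap M)^{-1}\!\int_{B_R\cap M} e^{pb}\,dv_g$, for a fixed power $p=p(n)$, with the exponents of the reduction depending on $n$ and its constants on $n,\nu_1,\nu_2,\delta$. \emph{Stage 2 (the oscillation bound).} I would then bound that normalized integral by a power of $\osc_{B_R}(u)/R^2$, via an integration‑by‑parts estimate on $M$: after subtracting an affine function $\ell$ from $u$ so that $u-\ell\ge 0$ on $B_R$ with $\sup_{B_R}(u-\ell)$ comparable to $\osc_{B_R}(u)$, one tests the equation against $u-\ell$ times a cutoff and uses the special form of the Laplace--Beltrami operator and of the position vector $(x,Du)$ on the Lagrangian graph. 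The scale‑invariant quotient $\osc_{B_R}(u)/R^2$ — dimensionally a Hessian, matching $D^2u(0)$ — is exactly what this manipulation produces.

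Combining the two stages and composing the exponents yields
\[
b(0)\ \le\ C_1\,\csc^{9n-6}(\delta/2)\ +\ C_2\,\csc^{9n-6}(\delta/2)\,\big(\osc_{B_R}(u)/R^2\big)^{4n-2},
\]
where $C_1,C_2$ depend on $n,\nu_1,\nu_2$: the exponent $4n-2$ is the composition of the Moser‑iteration power on the $n$‑manifold with the doubling coming from the quadratic term $|\dg b|^2$, while the power $9n-6$ of $\csc(\delta/2)$ is the accumulated loss from the repeated use of the eigenvalue bound $-\cot\delta\ge-C\csc(\delta/2)$ and of $c(\delta)^{-1}$ through the iteration. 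Since $|D^2u(0)|\le\lambda_{\max}(0)+\cot\delta\le e^{b(0)}+C\csc(\delta/2)$ is dominated by the right‑hand side, Theorem \ref{main1} follows. I expect Stage 2 to be the main obstacle: producing the integration‑by‑parts identity with the correct sign and with size controlled by $\osc(u)$ in the genuinely prescribed, \emph{non‑minimal} setting, where the manipulations generate $\Theta$‑derivative and ambient‑curvature error terms that must all be shown to be of lower order — this is where the full strength of the structure conditions \eqref{struct}, and a second use of the partial convexity of $\Theta$ in $p$, come in. Finally, Theorem \ref{main0} follows by checking that for \eqref{s}, \eqref{tran}, \eqref{rotator} the phase $\Theta(x,z,p)$, a polynomial of degree $\le 2$, satisfies the hypotheses of Theorem \ref{main1} — lying in $C^2(\Gamma_R)$, obeying \eqref{struct} with $\nu_1,\nu_2$ controlled by the respective coefficients $s_2$, $(t_2,t_3)$, $r_2$ and the $C^1$‑data of $u$ on $B_R$ (itself controlled by the convexity noted above, after normalizing $u(0)=0$), and being affine in $p$ for \eqref{s}, \eqref{tran} and equal to $\tfrac{r_2}{2}|p|^2$ plus an affine term for \eqref{rotator} — so that Theorem \ref{main1} applies and the coefficient dependence can be absorbed into $C_1,C_2$.
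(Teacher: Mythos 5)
The overall Warren--Yuan framework (pointwise Jacobi inequality for $b=\log\sqrt{1+\lambda_{\max}^2}$, a mean‑value estimate, a Sobolev inequality, an oscillation/gradient bound, exponentiate) is the right skeleton, but you miss the paper's pivotal geometric tool, the Lewy--Yuan rotation by the angle $\delta/2$. After rotating $(x,Du)\mapsto(\bar x,D\bar u)=e^{-i\delta/2}(x,Du)$ one gets $-\cot(\delta/2)I_n\le D^2\bar u\le\cot(\delta/2)I_n$, so the induced metric is sandwiched between $d\bar x^2$ and $\csc^n(\delta/2)\,d\bar x^2$; this simultaneously (i) makes the divergence‑form operator uniformly elliptic in Euclidean coordinates, so one can invoke the scalar mean‑value property for nonhomogeneous subsolutions (or De Giorgi--Moser) with explicit ellipticity constants, (ii) produces a Sobolev/isoperimetric inequality on the graph with explicit $\csc$‑loss, and (iii) bounds $\int dv_g\lesssim C(n)\csc^n(\delta/2)\,(R+\|Du\|_{L^\infty})^n$, which is how the volume is controlled. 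Your Michael--Simon route gives no natural mechanism for tracking the $\csc^{9n-6}(\delta/2)$ dependence through the iteration, nor for controlling the intrinsic volume of the graph; the rotation is precisely the replacement for these and is what the proof hinges on. A smaller point: the Jacobi constant is $c(n)$, independent of $\delta$ (the paper stresses that the Jacobi inequality already holds at criticality, $\delta=0$; the $\delta$‑loss enters only through the rotation), and the lower‑order term is $C(n,\nu_1,\nu_2)(1+|Du(x_0)|^2)$, i.e.\ it carries a gradient dependence that ultimately converts to oscillation.

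The second genuine gap is your ``Stage 2.'' You propose an integration‑by‑parts bound on $\int e^{pb}$ against $(u-\ell)\,\phi$, which you yourself flag as the main obstacle, and in the genuinely prescribed, $Du$‑dependent phase setting this would indeed generate a mess of $\Theta$‑derivative terms that you give no plan for absorbing. But the paper never needs such an estimate: the only place the oscillation enters is through the gradient bound $\|Du\|_{L^\infty(B_r)}\le\osc_{B_{r+1}}(u)+\tfrac{\cot\delta}{2}(r^2+4r+1)$ obtained from the semi‑convexity $D^2u\ge-\cot\delta\,I_n$ --- which you actually derive at the start of your outline and then do not use for this purpose. That gradient bound plugged into the $C(n,\nu_1,\nu_2)(1+|Du|^2)$ of the Jacobi inequality and into the rotated volume estimate $(R+\|Du\|_{L^\infty})^n$ is all the ``oscillation input'' required; your Stage 2 should be deleted and replaced by this observation. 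Finally, you do not address $n=2$: the Jacobi inequality of Lemma~\ref{ptJ} is stated for $n\ge3$, and the paper handles $n=2$ by appending a fixed eigenvalue $\arctan\lambda_3=\tfrac\pi2-\tfrac\delta2$ to reduce to the three‑dimensional supercritical equation. This reduction must be included for the theorem as stated.
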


   \begin{remark}
    We would like to point out that the Hessian estimates in terms of the gradient of the potential hold good for \eqref{s}, \eqref{tran}, \eqref{rotator} when the phase is critical and supercritical $|\Theta|\geq (n-2)\frac{\pi}{2}$, if one uses a modification of the integral approach presented in \cite{AB}. The estimates also extend to \eqref{slag} when $\Theta$ is partially convex in the gradient variable. However, deriving the corresponding gradient estimates is more subtle, and we address this in our forthcoming paper \cite{BWall3}.
   \end{remark}

An application of the above results is that $C^0$ viscosity solutions to \eqref{s},\eqref{tran}, and \eqref{rotator} with $|\Theta|\geq (n-2)\frac{\pi}{2}+\delta$ are analytic inside the domain of the solution.

For solutions of the special Lagrangian equation, i.e. $\Theta=constant$ with critical and supercritical phase $|\Theta|\geq (n-2)\frac{\pi}{2}$, Hessian estimates were obtained by Warren-Yuan \cite{WY9,WY}, Wang-Yuan \cite{WaY}, Li \cite{Lcomp} via a compactness approach, Shankar \cite{shankar2024hessian} via a doubling approach, and Zhou \cite{ZhouHess} for estimates requiring Hessian constraints which generalize criticality. The singular $C^{1,\alpha}$ solutions to \eqref{sl} constructed by Nadirashvili-Vl\u{a}du\c{t} \cite{NV} and Wang-Yuan \cite{WdY} show that interior regularity is not possible for subcritical phases $|\Theta|<(n-2)\frac{\pi}{2}$, without an additional convexity condition, as shown in Bao-Chen \cite{BCconvex}, Chen-Warren-Yuan \cite{CWY}, and Chen-Shankar-Yuan \cite{CSY}, and that the Dirichlet problem is not classically solvable for arbitrary smooth boundary data. For solutions of the Lagrangian mean curvature equation, i.e. $\Theta=\Theta(x)$ is variable, Hessian estimates for convex smooth solutions with $\Theta\in C^{1,1}$ were obtained by Warren in \cite[Theorem 8]{WTh}. For $C^{1,1}$ critical and supercritical phase, interior Hessian and gradient estimates were established by Bhattacharya \cite{AB, AB2d} and Bhattacharya-Mooney-Shankar \cite{BMS} (for $C^2$ phase) respectively. See also Lu \cite{Siyuan}. Interior Hessian estimates for supercritical $C^{0,1}$ phase were derived by Zhou \cite{Zhou1}. Recently interior Hessian estimates for supercritical and critical $C^{0,1}$ phases were found by Ding \cite{Ding1}. For convex viscosity solutions, interior regularity was established for $C^2$ phase by Bhattacharya-Shankar in \cite{BS1} and optimal regularity conditions were derived in \cite{BS2}. If $\Theta$ is merely in $C^{\alpha}$ and supercritical, counterexamples to Hessian estimates exist as shown in \cite{AB1}. For more work on the special Lagrangian and Lagrangian mean curvature type equations, we refer the reader to \cite{CaoWang,WHB24,QZ24,LiuBao}.

When the Lagrangian phase depends on both the potential and the gradient of the potential of the Lagrangian submanifold, $\Theta(x,u,Du)$, less is known. In \cite{BW}, the authors proved Hessian estimates for solutions of \eqref{s}, \eqref{tran}, \eqref{rotator}, \eqref{slag} under the assumption that the phase is hypercritical, i.e. $|\Theta|\geq (n-1)\frac{\pi}{2}$, a condition which results in the convexity of the potential function $u$. Note that for solutions of \eqref{slag}, Hessian estimates do not hold good without the additional convexity assumption of $\Theta$ in the gradient variable, $Du$, as illustrated by the counterexamples constructed in \cite{BS2}. The concavity of the arctangent operator in \eqref{sl} is closely associated with the range of the Lagrangian phase. When $|\Theta|\geq (n-1)\frac{\pi}{2}$, then $\lambda_i>0$ for all $1\leq i\leq n$, making the arctangent operator concave. However, when the phase is critical and supercritical, i.e. $|\Theta|\geq (n-2)\frac{\pi}{2}$, the potential function $u$ lacks convexity unlike in \cite{BW}. In this range, the level set $\{ \lambda \in \mathbb{R}^n \vert \lambda$ satisfying $ \eqref{sl}\}$ is convex \cite[Lemma 2.2]{YY} but the Hessian of the potential $u$ has no lower bound. 
At the critical value, the Hessian $D^2u$
can possess negative eigenvalues, with the smallest potentially approaching $-\infty$: This makes deriving the Jacobi inequality a challenging problem. In Lemma \ref{ptJ}, we derive a Jacobi inequality for a suitable choice of the slope of the Lagrangian graph that holds good when the phase is critical and supercritical. When the phase is supercritical, i.e. $|\Theta|\geq (n-2)\frac{\pi}{2}+\delta$ with $\delta>0$, the Hessian $D^2u$ has a lower bound given by $-\cot\delta I_n$, which we exploit to perform a version of the Lewy-Yuan rotation \cite{CW2} originally introduced in \cite{YY}. This enables us to derive both a mean value property and a Sobolev inequality for the slope of the Lagrangian graph. The Jacobi inequality combined with the mean value property and the Sobolev inequality leads to the desired estimate. 

Note that the results in section \ref{AJI} are valid when $\Theta$ is critical and supercritical.
However, the remainder of the proof in this paper does not hold at the critical value and requires $\Theta$
to be supercritical. The convexity of the level set at the critical value is insufficient to perform the Lewy-Yuan rotation \cite{YY} necessary to derive an appropriate mean value property.

\section*{Funding} AB acknowledges
the support of NSF Grant DMS-2350290, the Simons Foundation grant MPS-TSM-00002933. JW acknowledges
the support of the National Science Foundation RTG DMS-2135998 grant.

\section{Preliminaries}

For the convenience of the readers, we recall some preliminary results. We first introduce some notations that will be used in this paper.
The induced Riemannian metric on the Lagrangian submanifold $X=(x,Du(x))\subset \mathbb{R}^n\times\mathbb{R}^n$ is given by
\[g=I_n+(D^2u)^2 .
\]
We denote
 \begin{align*} 
    \partial_i=\frac{\partial}{\partial x_i} \text{ , }
     \partial_{ij}=\frac{\partial^2}{\partial x_i\partial x_j} \text{ , }
     u_i=\partial_iu \text{ , }
    u_{ij}=\partial_{ij}u.
    \end{align*}
  Note that for the functions defined below, the subscripts on the left do not represent partial derivatives\begin{align*}
    h_{ijk}=\sqrt{g^{ii}}\sqrt{g^{jj}}\sqrt{g^{kk}}u_{ijk},\quad
    g^{ii}=\frac{1}{1+\lambda_i^2}.
    \end{align*}
Here $(g^{ij})$ is the inverse of the matrix $g$ and $h_{ijk}$ denotes the second fundamental form when the Hessian of $u$ is diagonalized.
The volume form, gradient, and inner product with respect to the metric $g$ are given by
\begin{align*}
    dv_g=\sqrt{\det g}dx &= Vdx \text{ , }\qquad
    \nabla_g v=g^{ij}v_iX_j,\\
    \langle\nabla_gv,\nabla_g w\rangle_g &=g^{ij}v_iw_j \text{ , }\quad
    |\nabla_gv|^2=\langle\nabla_gv,\nabla_g v\rangle_g.
\end{align*}

We state the following lemma. 
\begin{lemma}\label{y1}
		Suppose that the ordered real numbers $\lambda_{1}\geq \lambda_{2}\geq...\geq \lambda_{n}$ satisfy \eqref{sl} with $\Theta\geq (n-2)\frac{\pi}{2}$.
		Then we have \begin{enumerate}
			\item $\lambda_{1}\geq \lambda_{2}\geq...\geq \lambda_{n-1}>0,\quad \lambda_{n-1}\geq |\lambda_{n}|$.
			\item $\lambda_{1}+(n-1)\lambda_{n}\geq 0$.
			\item $\sigma_{k}(\lambda_{1},...,\lambda_{n})\geq 0$ for all $1\leq k\leq n-1$ and $n\geq 2$.
			\item  If $\Theta\geq (n-2)\frac{\pi}{2}+\delta$, then $D^2u\geq -\cot \delta I_n$.
			
		\end{enumerate}
		
	\end{lemma}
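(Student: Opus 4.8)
\medskip

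The plan is to handle (1) and (4) by elementary estimates on $\arctan$, then (2) by a one‑variable monotonicity argument, and finally (3) — the substantive part — by reducing it to the single inequality $\sum_i\lambda_i^{-1}\le 0$. For (1): since each $\arctan\lambda_i<\tfrac{\pi}{2}$, if $\lambda_{n-1}\le 0$ then at most $n-2$ of the terms are positive, so $\Theta=\sum_{i=1}^n\arctan\lambda_i<(n-2)\tfrac{\pi}{2}$, contradicting the hypothesis; hence $\lambda_1\ge\cdots\ge\lambda_{n-1}>0$. Subtracting the first $n-2$ terms, whose sum is strictly less than $(n-2)\tfrac{\pi}{2}$, gives $\arctan\lambda_{n-1}+\arctan\lambda_n>0$, and as $\arctan$ is odd and increasing this forces $\lambda_{n-1}>-\lambda_n$, hence $\lambda_{n-1}\ge|\lambda_n|$. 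For (4): isolating $\arctan\lambda_n=\Theta-\sum_{i=1}^{n-1}\arctan\lambda_i>(n-2)\tfrac{\pi}{2}+\delta-(n-1)\tfrac{\pi}{2}=\delta-\tfrac{\pi}{2}$ and applying $\tan$ yields $\lambda_n>\tan(\delta-\tfrac{\pi}{2})=-\cot\delta$, i.e.\ $D^2u>-\cot\delta\,I_n$.

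For (2) we may assume $\lambda_n<0$ and set $a:=-\lambda_n>0$; the case $n=2$ is immediate from $\arctan\lambda_1\ge-\arctan\lambda_2$. For $n\ge 3$, suppose $\lambda_1<(n-1)a$. Then by (1) all of $\lambda_1,\dots,\lambda_{n-1}$ lie in $(0,(n-1)a)$, so $\sum_{i=1}^{n-1}\arctan\lambda_i<(n-1)\arctan\big((n-1)a\big)$, whereas the phase condition gives $\sum_{i=1}^{n-1}\arctan\lambda_i=\Theta+\arctan a\ge(n-2)\tfrac{\pi}{2}+\arctan a$. Hence $(n-1)\arctan\big((n-1)a\big)-\arctan a>(n-2)\tfrac{\pi}{2}$. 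But $f(a):=(n-1)\arctan\big((n-1)a\big)-\arctan a$ satisfies $f(0)=0$, has $f'(a)=\tfrac{(n-1)^2}{1+(n-1)^2a^2}-\tfrac{1}{1+a^2}>0$ for $a>0$ (since $(n-1)^2>1$), and $f(a)\to(n-2)\tfrac{\pi}{2}$ as $a\to\infty$; so $f(a)<(n-2)\tfrac{\pi}{2}$ for all $a>0$, a contradiction. Thus $\lambda_1+(n-1)\lambda_n\ge 0$.

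For (3): if $\lambda_n\ge 0$ then every $\lambda_i\ge 0$ and all $\sigma_k(\lambda)\ge 0$; so assume $\lambda_n<0$, put $a:=-\lambda_n>0$ and $\lambda':=(\lambda_1,\dots,\lambda_{n-1})$, whose entries are positive by (1). The expansion
\[
\sigma_k(\lambda)=\sigma_k(\lambda')+\lambda_n\,\sigma_{k-1}(\lambda')=\sigma_{k-1}(\lambda')\Big(\tfrac{\sigma_k(\lambda')}{\sigma_{k-1}(\lambda')}-a\Big)
\]
reduces the claim to $\tfrac{\sigma_k(\lambda')}{\sigma_{k-1}(\lambda')}\ge a$ for $1\le k\le n-1$. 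By Newton's inequalities for the positive tuple $\lambda'$, the ratios $\tfrac{\sigma_k(\lambda')}{\sigma_{k-1}(\lambda')}$ are non-increasing in $k$, so it is enough to bound the last one, $\tfrac{\sigma_{n-1}(\lambda')}{\sigma_{n-2}(\lambda')}=\big(\sum_{i=1}^{n-1}\lambda_i^{-1}\big)^{-1}$, from below by $a$, i.e.\ to show $\sum_{i=1}^n\lambda_i^{-1}\le 0$. Here the phase condition re-enters through the reciprocal identities $\arctan\lambda_i^{-1}=\tfrac{\pi}{2}-\arctan\lambda_i$ for $\lambda_i>0$ and $\arctan\lambda_n^{-1}=-\tfrac{\pi}{2}-\arctan\lambda_n$ for $\lambda_n<0$, which give
\[
\sum_{i=1}^n\arctan\lambda_i^{-1}=(n-2)\tfrac{\pi}{2}-\Theta\le 0 ,
\]
hence $\sum_{i=1}^{n-1}\arctan\lambda_i^{-1}\le\arctan|\lambda_n|^{-1}$. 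Since $\arctan$ is concave on $[0,\infty)$ with $\arctan 0=0$, it is subadditive there, so $\arctan\big(\sum_{i=1}^{n-1}\lambda_i^{-1}\big)\le\sum_{i=1}^{n-1}\arctan\lambda_i^{-1}\le\arctan|\lambda_n|^{-1}$, and monotonicity of $\arctan$ gives $\sum_{i=1}^{n-1}\lambda_i^{-1}\le|\lambda_n|^{-1}$, i.e.\ $\sum_{i=1}^n\lambda_i^{-1}\le 0$, as required.

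I expect (3) to be the only genuinely delicate step: the point is to recognize that the recursion collapses the problem to the symmetric-function ratios, that Newton's inequalities then reduce the list of ratios to the single extreme value $\big(\sum_i\lambda_i^{-1}\big)^{-1}$, and that transporting the supercriticality bound to the reciprocals $\lambda_i^{-1}$ and invoking subadditivity of $\arctan$ is precisely what produces $\sum_i\lambda_i^{-1}\le 0$; the sign bookkeeping in the reciprocal identity for $\arctan$ (different for positive and negative $\lambda_i$) is the one place to be careful. Parts (1), (2) and (4) need nothing beyond monotonicity and concavity of $\arctan$ together with the ordering established in (1).
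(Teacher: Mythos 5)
Your proof is correct, but it takes a more self-contained route than the paper does. The paper simply cites \cite[Lemma 2.2]{WaY} for parts (1)--(3) and only supplies a short direct argument for (4), whereas you reprove everything from scratch. Your arguments for (1), (2), (4) are clean applications of monotonicity of $\arctan$; (4) in particular is essentially the same idea the paper uses (isolate one $\arctan\lambda_i$, note the other $n-1$ terms sum to strictly less than $(n-1)\tfrac{\pi}{2}$, apply $\tan$), just phrased without the contradiction setup. The real divergence is (3): your chain --- expand $\sigma_k(\lambda)=\sigma_{k-1}(\lambda')\bigl(\tfrac{\sigma_k(\lambda')}{\sigma_{k-1}(\lambda')}-a\bigr)$, invoke Newton's inequalities to see that $\tfrac{\sigma_k(\lambda')}{\sigma_{k-1}(\lambda')}$ is monotone in $k$ so only the extreme ratio $\bigl(\sum_{i<n}\lambda_i^{-1}\bigr)^{-1}$ matters, and then derive $\sum_i\lambda_i^{-1}\le 0$ from the reciprocal identity $\sum_i\arctan\lambda_i^{-1}=(n-2)\tfrac{\pi}{2}-\Theta\le 0$ together with subadditivity of $\arctan$ on $[0,\infty)$ --- collapses the whole family of inequalities $\sigma_k\ge 0$ to a single scalar inequality, which is a genuinely nice reduction. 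The one place to be slightly careful, which you handled correctly, is the unnormalized version of Newton's inequalities: the normalized ratios $p_k/p_{k-1}$ are what Newton controls directly, but since $\binom{m}{k-1}\binom{m}{k+1}/\binom{m}{k}^2<1$ the monotonicity transfers to $\sigma_k/\sigma_{k-1}$ as well. What you buy with this route is a fully self-contained lemma that does not lean on the Wang--Yuan reference; what the paper buys with the citation is brevity, since the inequalities are standard in the special-Lagrangian literature.
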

\begin{proof}
Properties (1), (2), and (3) follow from \cite[Lemma 2.2]{WaY}. 
We will prove property (4). We claim that if $\Theta\geq (n-2)\frac{\pi}{2}+\delta$, then \[\arctan \lambda_i\geq (\delta-\frac{\pi}{2}). 
\] Suppose not and we have $\arctan \lambda_i<(\delta-\frac{\pi}{2})$ instead. Then we must have 
\[\sum_{k\neq i}\arctan\lambda_k>(n-2)\frac{\pi}{2}+\delta-(\delta-\frac{\pi}{2})=(n-1)\frac{\pi}{2}
\]
which contradicts the fact $\arctan\lambda_k\leq\frac{\pi}{2}.$ So it follows that $D^2u\geq \tan(\delta-\frac{\pi}{2})I_n$.
\end{proof}
For the convenience of the reader, we recall the following proposition from \cite{AB}.

\begin{proposition} \cite[Proposition 4.1]{AB}
    Let $u$ be a smooth solution to \eqref{slag} in $\re^n$. Suppose that the Hessian $D^2u$ is diagonalized and the eigenvalue $\lambda_\gamma$ is distinct from all other eigenvalues of $D^2u$ at point $x_0$. Then at $x_0$ we have
    \begin{equation}\label{dgln}
        \left|\dg \ln\sqrt{1 + \lambda_\gamma^2}\right|^2 = \sum_{k=1}^n\lambda_\gamma^2h_{\gamma\gamma k}^2
    \end{equation}
    and
    \begin{align}
        \Dg \ln\sqrt{1 + \lambda_\gamma^2} &= (1 + \lambda_\gamma^2)h_{\gamma\gamma\gamma}^2 + \sum_{k\neq \gamma}\left(\frac{2\lambda_\gamma}{\lambda_\gamma - \lambda_k} + \frac{2\lambda_\gamma^2\lambda_k}{\lambda_\gamma - \lambda_k} \right)h_{kk\gamma}^2\nonumber\\
        &+\sum_{k \neq \gamma}\left[1 + \frac{2\lambda_\gamma}{\lambda_\gamma-\lambda_k} + \frac{\lambda_\gamma^2(\lambda_\gamma + \lambda_k)}{\lambda_\gamma - \lambda_k}\right]h^2_{\gamma\gamma k}\label{Dgln}\\
        &+\sum_{\substack{k>j \\ k,j\neq \gamma}} 2\lambda_\gamma\left[\frac{1 + \lambda_k^2}{\lambda_\gamma - \lambda_k}+\frac{1 + \lambda_j^2}{\lambda_\gamma - \lambda_j} + (\lambda_j + \lambda_k)\right]h_{kj\gamma}^2\nonumber\\
        &+ \frac{\lambda_\gamma}{1 + \lambda_\gamma^2}\pd^2_{\gamma\gamma}\Theta - \sum_{a=1}^n\lambda_ag^{aa}(\pd_a\Theta)\pd_a\ln\sqrt{1 + \lambda_\gamma^2}.\label{Dgln2}
    \end{align} 
\end{proposition}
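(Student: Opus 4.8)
The plan is to compute both identities directly at the point $x_0$ from three ingredients: the classical first- and second-order perturbation formulas for a simple eigenvalue, the linearization of \eqref{slag}, and the standard expression $\Dg = g^{ij}\pd_{ij} - g^{jp}u_{pq}(\pd_q\Theta)\pd_j$ for the Laplace--Beltrami operator on $(x,Du(x))$. Throughout I work at $x_0$ with $D^2u$ diagonalized, so that $g^{ii}=1/(1+\lambda_i^2)$ and $u_{ijk}^2 = (1+\lambda_i^2)(1+\lambda_j^2)(1+\lambda_k^2)h_{ijk}^2$. Since $\lambda_\gamma$ is simple at $x_0$ it is a smooth function of $x$ near $x_0$, and differentiating $(D^2u-\lambda_\gamma I)v_\gamma=0$ for the unit $\lambda_\gamma$-eigenvector $v_\gamma$ gives, at $x_0$,
\[
\pd_e\lambda_\gamma = u_{\gamma\gamma e}, \qquad \pd_{ee}\lambda_\gamma = u_{\gamma\gamma ee} + 2\sum_{k\neq\gamma}\frac{u_{k\gamma e}^2}{\lambda_\gamma-\lambda_k}.
\]

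Next I would record two consequences of differentiating the equation. Differentiating \eqref{sl} once yields the linearized equation $\sum_{a,b}g^{ab}u_{jab}=\pd_j\Theta$, where $\pd_j\Theta$ is the total derivative of $\Theta(x,u,Du)$ along the solution. Using that $\pd_i g_{ab}=u_{abi}(\lambda_a+\lambda_b)$ at $x_0$ (immediate from $g=I_n+(D^2u)^2$), hence $\pd_\gamma g^{ab}=-g^{aa}g^{bb}(\lambda_a+\lambda_b)u_{ab\gamma}$, and differentiating the linearized equation once more in the $x_\gamma$ direction, I get
\[
\sum_e g^{ee}u_{\gamma\gamma ee} = \pd^2_{\gamma\gamma}\Theta + \sum_{a,b}g^{aa}g^{bb}(\lambda_a+\lambda_b)u_{ab\gamma}^2
\]
at $x_0$.

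Now set $\phi=\ln\sqrt{1+\lambda_\gamma^2}=\tfrac12\ln(1+\lambda_\gamma^2)$, so $\pd_e\phi=\frac{\lambda_\gamma}{1+\lambda_\gamma^2}u_{\gamma\gamma e}$; then $|\dg\phi|^2=\sum_e g^{ee}(\pd_e\phi)^2=\sum_e\lambda_\gamma^2 h_{\gamma\gamma e}^2$, which is \eqref{dgln}. For the Laplacian, $\Dg\phi=\sum_e g^{ee}\pd_{ee}\phi-\sum_j\lambda_j g^{jj}(\pd_j\Theta)\pd_j\phi$, and the last sum is exactly the final term in \eqref{Dgln2}. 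Expanding $\pd_{ee}\phi=\frac{1-\lambda_\gamma^2}{(1+\lambda_\gamma^2)^2}(\pd_e\lambda_\gamma)^2+\frac{\lambda_\gamma}{1+\lambda_\gamma^2}\pd_{ee}\lambda_\gamma$ and substituting the previous two steps produces, after converting all $u$-terms to $h$-terms, the term $\frac{\lambda_\gamma}{1+\lambda_\gamma^2}\pd^2_{\gamma\gamma}\Theta$ together with the three sums $\sum_e(1-\lambda_\gamma^2)h_{\gamma\gamma e}^2$, $2\lambda_\gamma\sum_e\sum_{k\neq\gamma}\frac{1+\lambda_k^2}{\lambda_\gamma-\lambda_k}h_{\gamma k e}^2$, and $\lambda_\gamma\sum_{a,b}(\lambda_a+\lambda_b)h_{ab\gamma}^2$. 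Collecting these according to the index patterns $\{\gamma\gamma\gamma\}$, $\{kk\gamma\}$, $\{\gamma\gamma k\}$ ($k\neq\gamma$), and $\{kj\gamma\}$ ($k>j$, $k,j\neq\gamma$) — picking up a factor $2$ whenever an unordered pair comes from two ordered pairs $(a,b)$ — and simplifying the rational functions of the $\lambda_i$ reproduces the coefficients in \eqref{Dgln}; for instance the coefficient of $h_{\gamma\gamma\gamma}^2$ is $(1-\lambda_\gamma^2)+2\lambda_\gamma^2=1+\lambda_\gamma^2$, and that of $h_{kk\gamma}^2$ is $\frac{2\lambda_\gamma(1+\lambda_k^2)}{\lambda_\gamma-\lambda_k}+2\lambda_\gamma\lambda_k=\frac{2\lambda_\gamma}{\lambda_\gamma-\lambda_k}+\frac{2\lambda_\gamma^2\lambda_k}{\lambda_\gamma-\lambda_k}$.

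I expect the only genuine difficulty to be the bookkeeping in this last step: distinguishing diagonal from off-diagonal third derivatives, handling the combinatorial multiplicities correctly when two of the three indices coincide versus when all three are distinct, and checking that the denominators $\lambda_\gamma-\lambda_k$ from the second-variation formula combine with the polynomial contributions to give precisely the stated coefficients. The hypothesis that $\lambda_\gamma$ is simple at $x_0$ enters in two essential places: it makes $\lambda_\gamma$ (hence $\phi$) smooth near $x_0$, and it keeps every denominator $\lambda_\gamma-\lambda_k$ nonzero.
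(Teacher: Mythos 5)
Your proposal is correct, and the calculation goes through: the simple-eigenvalue perturbation formulas $\pd_e\lambda_\gamma=u_{\gamma\gamma e}$ and $\pd_{ee}\lambda_\gamma=u_{\gamma\gamma ee}+2\sum_{k\neq\gamma}(\lambda_\gamma-\lambda_k)^{-1}u_{k\gamma e}^2$, combined with the twice-differentiated equation $\sum_e g^{ee}u_{\gamma\gamma ee}=\pd^2_{\gamma\gamma}\Theta+\sum_{a,b}g^{aa}g^{bb}(\lambda_a+\lambda_b)u_{ab\gamma}^2$ and the nondivergence form of $\Delta_g$, reproduce both \eqref{dgln} and the full coefficient list in \eqref{Dgln}--\eqref{Dgln2} after the index bookkeeping you describe. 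This is precisely the route taken in the cited source \cite{AB}; the present paper simply quotes the proposition without reproving it.
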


\section{Jacobi inequality}\label{AJI}

In this section, we prove a Jacobi-type inequality for the slope of the gradient graph $(x,Du(x))$. 
\begin{lemma}\label{ptJ}
    Let $u$ be a smooth solution to \eqref{slag} in $\re^n$ with $n\geq 3$ and $|\Theta| \geq (n-2)\frac{\pi}{2}$ where $\Theta(x,z,p)$ is partially convex in the $p$ variable. Suppose that the ordered eigenvalues $\lambda_1\geq \lambda_2 \geq \cdots \geq \lambda_n$ of the Hessian $D^2u$ satisfy $\lambda_1 = \cdots =  \lambda_m > \lambda_{m+1}$ at a point $x_0$. Then the function $ b_m = \frac{1}{m}\sum_{i = 1}^m \ln\sqrt{1 + \lambda_i^2}$ is smooth near $x_0$ and satisfies at $x_0$
    \begin{equation}\label{ptJI}
        \Dg b_m \geq c(n)|\dg b_m|^2 - C(\nu_1,\nu_2,n)(1 + |Du(x_0)|^2 ).
    \end{equation}
\end{lemma}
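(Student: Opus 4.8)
The plan is to reduce the statement to the single–eigenvalue computation already recorded in Proposition \cite[4.1]{AB}. First I would note that near $x_0$ the sum $\lambda_1+\cdots+\lambda_m$ of the top $m$ eigenvalues (equivalently the elementary symmetric functions of the top block) is a smooth function of $x$, because the spectral projection onto the eigenspace of the cluster $\lambda_1=\cdots=\lambda_m>\lambda_{m+1}$ varies smoothly when the block is separated from the rest of the spectrum; hence $b_m=\frac1m\sum_{i\le m}\ln\sqrt{1+\lambda_i^2}$ is smooth near $x_0$, even though the individual $\lambda_i$ need not be. Then, working at $x_0$ with $D^2u$ diagonalized, I would write $b_m=\frac1m\sum_{\gamma=1}^m \ln\sqrt{1+\lambda_\gamma^2}$ and differentiate term by term, applying \eqref{dgln} and \eqref{Dgln}--\eqref{Dgln2} to each $\gamma\le m$. (Strictly speaking \eqref{Dgln} is stated for a simple eigenvalue, so I would either first perturb $u$ to split the cluster and pass to the limit, or re-derive the block version directly; the limiting quantities $h_{kj\gamma}$, $\lambda_\gamma/(\lambda_\gamma-\lambda_k)$ etc. all stay finite because for $\gamma,\gamma'\le m$ with $\gamma\ne\gamma'$ one has $\lambda_\gamma=\lambda_{\gamma'}$ and the genuinely singular terms cancel in the symmetric sum.)

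Next I would organize the resulting identity as $\Dg b_m = (\text{good cubic terms in } h) + (\text{error terms})$. The error terms are exactly the last line \eqref{Dgln2}: the $\frac{\lambda_\gamma}{1+\lambda_\gamma^2}\pd^2_{\gamma\gamma}\Theta$ piece and the $-\sum_a \lambda_a g^{aa}(\pd_a\Theta)\,\pd_a\ln\sqrt{1+\lambda_\gamma^2}$ piece. For the first I would use $|\lambda_\gamma|/(1+\lambda_\gamma^2)\le \tfrac12$ together with the structure conditions \eqref{struct}: from \eqref{dipsi@p}--\eqref{dijpsi@p}, $\pd^2_{\gamma\gamma}\Theta$ at $x_0$ is bounded by $C(\nu_1,\nu_2,n)(1+|Du|^2)$ plus a term $\Theta_{u_\gamma}u_{\gamma\gamma\gamma}=\Theta_{u_\gamma}\lambda_\gamma\, h_{\gamma\gamma\gamma}\sqrt{1+\lambda_\gamma^2}$ coming from $\sum_k\Theta_{u_k}u_{k\gamma\gamma}$; this last term is linear in $h$, so it is absorbed into $\tfrac12 c(n)|\dg b_m|^2 + C(\nu_1,\nu_2,n)(1+|Du|^2)$ by Cauchy–Schwarz using $|\dg b_m|^2\sim \sum_{\gamma\le m,k}\lambda_\gamma^2 h_{\gamma\gamma k}^2/m^2$. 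The second error piece is $\le C\nu_1\sum_a |\lambda_a| g^{aa}|\pd_a\ln\sqrt{1+\lambda_\gamma^2}|$ and, since $|\lambda_a|g^{aa}\le\tfrac12$ and $|\pd_a\ln\sqrt{1+\lambda_\gamma^2}|\le|\lambda_\gamma||h_{\gamma\gamma a}|$, it is again linear in $h$ and absorbed the same way.

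The real content, and the step I expect to be the main obstacle, is showing that the remaining cubic-in-$h$ terms — the first three lines of \eqref{Dgln} summed over $\gamma\le m$, namely
\[
\sum_{\gamma\le m}\Big[(1+\lambda_\gamma^2)h_{\gamma\gamma\gamma}^2+\sum_{k\ne\gamma}\Big(\tfrac{2\lambda_\gamma}{\lambda_\gamma-\lambda_k}+\tfrac{2\lambda_\gamma^2\lambda_k}{\lambda_\gamma-\lambda_k}\Big)h_{kk\gamma}^2+\sum_{k\ne\gamma}\big[1+\tfrac{2\lambda_\gamma}{\lambda_\gamma-\lambda_k}+\tfrac{\lambda_\gamma^2(\lambda_\gamma+\lambda_k)}{\lambda_\gamma-\lambda_k}\big]h_{\gamma\gamma k}^2+\sum_{\substack{k>j\\k,j\ne\gamma}}2\lambda_\gamma\big[\tfrac{1+\lambda_k^2}{\lambda_\gamma-\lambda_k}+\tfrac{1+\lambda_j^2}{\lambda_\gamma-\lambda_j}+(\lambda_j+\lambda_k)\big]h_{kj\gamma}^2\Big]
\]
dominate $c(n)\,|\dg b_m|^2 = \tfrac{c(n)}{m^2}\big|\sum_{\gamma\le m}\dg\ln\sqrt{1+\lambda_\gamma^2}\big|^2 \le \tfrac{c(n)}{m}\sum_{\gamma\le m}\sum_k\lambda_\gamma^2 h_{\gamma\gamma k}^2$. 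This is a linear-algebra inequality among the $h$'s that must use Lemma \ref{y1}: with $|\Theta|\ge(n-2)\tfrac\pi2$ we have $\lambda_1\ge\cdots\ge\lambda_{n-1}>0$ and $\lambda_{n-1}\ge|\lambda_n|$, and for indices in the top block $\lambda_\gamma=\lambda_1$ is the largest eigenvalue. I would split by the type of third-derivative index: (a) $h_{\gamma\gamma k}$ with $\gamma\le m$, where the bracket $1+\tfrac{2\lambda_\gamma}{\lambda_\gamma-\lambda_k}+\tfrac{\lambda_\gamma^2(\lambda_\gamma+\lambda_k)}{\lambda_\gamma-\lambda_k}$ is, for $\lambda_\gamma=\lambda_1\ge\lambda_k$, bounded below by a fixed positive multiple of $\lambda_\gamma^2$ (this is where criticality, i.e. $\lambda_\gamma+\lambda_k\ge 0$ or controlled negativity via $\lambda_1+(n-1)\lambda_n\ge0$, enters); (b) the $h_{kk\gamma}$ and off-diagonal $h_{kj\gamma}$ terms, which one shows are nonnegative or dominated, again using $\lambda_1$ maximal so all denominators $\lambda_\gamma-\lambda_k\ge0$; and (c) the partial convexity of $\Theta$ in $p$, which contributes a favorable (nonnegative) term of the form $\sum g^{\gamma\gamma}\Theta_{p_ap_b}u_{a\gamma\gamma}u_{b\gamma\gamma}$ hidden inside $\pd^2_{\gamma\gamma}\Theta$ and is precisely what lets us drop an otherwise bad sign. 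Combining (a)–(c) gives $\Dg b_m\ge c(n)|\dg b_m|^2-C(\nu_1,\nu_2,n)(1+|Du(x_0)|^2)$, with $c(n)$ depending only on $n$ through the combinatorics of the block of size $m\le n$ — as claimed, with no dependence on $\delta$. The delicate point throughout is keeping every coefficient bounded and every sign correct as $\lambda_1\to\infty$ while $\lambda_n$ may tend to $-\infty$ at the critical phase, which is exactly why the argument needs Lemma \ref{y1}(1)–(3) rather than convexity of $u$.
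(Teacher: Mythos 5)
Your overall plan mirrors the paper's: sum the single-eigenvalue identity \eqref{Dgln}--\eqref{Dgln2} over $\gamma\le m$, separate the $\Theta$-dependent ``error'' terms, use partial convexity in $p$ to drop $\Theta_{u_\gamma u_\gamma}\lambda_\gamma^2\ge0$, and use Lemma~\ref{y1} to control the remaining quadratic form in the $h_{ijk}$. Your treatment of the error terms and of the $h_{\gamma\gamma k}^2$ coefficient (your case (a), the paper's Steps~2.2--2.4) is on target, and you correctly identify $\lambda_1+(n-1)\lambda_n\ge0$ as the ingredient giving $\frac{\lambda+\lambda_n}{\lambda-\lambda_n}\ge\frac{n-2}{n}$.

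However, there is a genuine gap at the step you label (b). The coefficient of $h_{nn\gamma}^2$ ($\gamma\le m$), namely $\frac{2\lambda(1+\lambda\lambda_n)}{\lambda-\lambda_n}$, is \emph{not} nonnegative when $\lambda_n<0$ and $\lambda\lambda_n<-1$, and it is not ``dominated'' by any other term in the sum by a pointwise sign argument: there is no algebraic relation between $h_{nn\gamma}$ and the positive-coefficient terms $h_{\gamma\gamma k}^2$, $h_{\gamma\gamma\gamma}^2$ that you can invoke for free. At the critical/supercritical phase, with $\lambda\to\infty$, this coefficient behaves like $2\lambda^2\lambda_n/(\lambda-\lambda_n)$ and produces an unbounded negative contribution. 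The paper's Step~2.1 handles exactly this: for each fixed $\gamma$ one uses the linearized equation (trace of the third-derivative tensor equals $\partial_\gamma\Theta$), i.e.\ $h_{nn\gamma}=H^\gamma-\sum_{i<n}h_{ii\gamma}$ with $\lambda^2(H^\gamma)^2\le C\nu_1^2(1+|Du|^2)$, then expands via Young's inequality, applies Cauchy--Schwarz across the index $i$, and reduces the problem to choosing $\epsilon(n)$ and an auxiliary $\eta$ so that an explicit quadratic in $\epsilon$ is nonnegative. This constraint-elimination of $h_{nn\gamma}$ (not dominance, not a sign argument) is the core of the lemma, and without it the inequality you assert in (b) is simply false. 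To complete your proof you would need to reconstruct this substitution and the accompanying elementary but delicate choice of $\epsilon(n)$ and $\eta$ — that is where essentially all of the work in the paper's argument lies.
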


\begin{proof}
    \begin{itemize}
        \item[Step 1.] The function $b_m$ is symmetric in $\lambda_1,\dots,\lambda_m$. Thus, for $m< n$, $b_m$ is smooth in terms of the matrix entries when $\lambda_m>\lambda_{m+1}$. It is still smooth in terms of $x$ since $D^2u(x)$ is smooth, in particular at $x_0$ where $\lambda_1= \dots = \lambda_m>\lambda_{m+1}$. For $m=n$ it is clear that $b_n$ is smooth everywhere. 

        First, we assume that the first $m$ eigenvalues are distinct. Summing up all three lines of \eqref{Dgln} and \eqref{Dgln2}, and grouping the mean curvature terms into $h_{***},h_{**\%},h_{*\%!}$ we get
        \begin{align*}
            m\Delta_g & b_m(x_0) = \sum_{k=1}^m(1 + \lambda_k^2)h_{kkk}^2 + \left(\sum_{i<k\leq m} + \sum_{k<i\leq m}\right)(3 + \lambda_i^2 + 2\lambda_i\lambda_k)h_{iik}^2\\
            &+\sum_{k\leq m < i}\frac{2\lambda_k(1+\lambda_k\lambda_i)}{\lambda_k-\lambda_i}h_{iik}^2 + \sum_{i\leq m < k}\frac{3\lambda_i-\lambda_k + \lambda_i^2(\lambda_i + \lambda_k)}{\lambda_i-\lambda_k}h_{iik}^2\\
            &+2\bigg[\sum_{i<j<k\leq m}(3 + \lambda_i\lambda_j + \lambda_j\lambda_k + \lambda_k\lambda_i)h_{ijk}^2\\
            &+ \sum_{i<j\leq m < k}(1 + \lambda_i\lambda_j + \lambda_j\lambda_k + \lambda_k\lambda_i + \lambda_i\frac{1+\lambda_k^2}{\lambda_i-\lambda_k} +\lambda_j\frac{1 + \lambda_k^2}{\lambda_j-\lambda_k})h_{ijk}^2\\
            &+\sum_{i\leq m < j<k}\lambda_i\left(\lambda_j + \lambda_k + \frac{1+\lambda_j^2}{\lambda_i-\lambda_j}+ \frac{1+\lambda_k^2}{\lambda_i-\lambda_k}\right)h_{ijk}^2\bigg]\\
            &+ \sum_{i=1}^m\frac{\lambda_i}{1 + \lambda_i^2}\pd_{ii}^2\Theta - m\sum_{a=1}^n\frac{\lambda_a}{1 + \lambda_a^2}(\pd_a\Theta)\pd_a b_m.
        \end{align*}

        As a function of matrices, $b_m$ is $C^2$ at $D^2u(x_0)$ with eigenvalues satisfying $\lambda = \lambda_1=\cdots=\lambda_m>\lambda_{m+1}$. We can approximate $D^2u(x_0)$ by matrices with distinct eigenvalues. The above expression for $\Delta_g b_m$ at $x_0$ holds and simplifies via part $(1)$ of Lemma \ref{y1} to
        \begin{align}
            m\Delta_g & b_m(x_0) = \sum_{k=1}^m(1 + \lambda^2)h_{kkk}^2 + \left(\sum_{i<k\leq m} + \sum_{k<i\leq m}\right)(3 + 3\lambda^2)h_{iik}^2\nonumber\\
            &\qquad+\sum_{k\leq m < i}\frac{2\lambda(1+\lambda\lambda_i)}{\lambda-\lambda_i}h_{iik}^2 + \sum_{i\leq m < k}\frac{3\lambda-\lambda_k + \lambda^2(\lambda + \lambda_k)}{\lambda-\lambda_k}h_{iik}^2\nonumber\\
            &\qquad+2\bigg[\sum_{i<j<k\leq m}(3 + 3\lambda^2)h_{ijk}^2\nonumber\\
            &\qquad+ \sum_{i<j\leq m < k}\left(1 + \frac{2\lambda}{\lambda-\lambda_k}+ \frac{\lambda^2(\lambda+\lambda_k)}{\lambda-\lambda_k}\right)h_{ijk}^2\nonumber\\
            &\qquad+\sum_{i\leq m < j<k}\lambda\left(\lambda_j + \lambda_k + \frac{1+\lambda_j^2}{\lambda-\lambda_j}+ \frac{1+\lambda_k^2}{\lambda-\lambda_k}\right)h_{ijk}^2\bigg]\nonumber\\
            &\qquad+ \sum_{i=1}^m\frac{\lambda_i}{1 + \lambda_i^2}\pd_{ii}^2\Theta - m\sum_{a=1}^n\frac{\lambda_a}{1 + \lambda_a^2}(\pd_a\Theta)\pd_a b_m\nonumber\\
            &\geq \sum_{k=1}^m\lambda^2h_{kkk}^2 + \left(\sum_{i<k\leq m} + \sum_{k< i\leq m}\right) 3\lambda^2 h_{iik}^2 + \sum_{k\leq m < i}\frac{2\lambda^2\lambda_i}{\lambda-\lambda_i}h_{iik}^2\label{lapbm}\\
            &\qquad + \sum_{i\leq m< k}\frac{\lambda^2(\lambda+\lambda_k)}{\lambda-\lambda_k}h_{iik}^2 + \sum_{i=1}^m\frac{\lambda_i}{1 + \lambda_i^2}\pd_{ii}^2\Theta - m\sum_{a=1}^n\frac{\lambda_a}{1 + \lambda_a^2}(\pd_a\Theta)\pd_a b_m.\nonumber
        \end{align}

        By \eqref{dgln} and the $C^1$ continuity of $b_m$ as a function of matrices at $D^2u(x_0)$, we have
        \begin{equation}\label{gradbm}
            |\nabla_g b_m|^2(x_0) = \frac{1}{m^2}\sum_{k=1}^n\lambda^2\left(\sum_{i=1}^m h_{iik}\right)^2\leq \frac{\lambda^2}{m}\sum_{k=1}^n\sum_{i=1}^m h_{iik}^2.
        \end{equation}

        Combining \eqref{lapbm} and \eqref{gradbm} we see that
        \begin{align}
            &m(\Delta_g b_m - \epsilon(n)|\nabla_g b_m|^2)\geq\nonumber\\
            & \lambda^2\left[\sum_{k=1}^m(1-\epsilon)h_{kkk}^2 + \left(\sum_{i<k\leq m}+\sum_{k<i\leq m}\right)(3-\epsilon)h_{iik}^2 + 2\sum_{k\leq m< i}\frac{\lambda_i}{\lambda-\lambda_i}h_{iik}^2\right]\label{Z1}\\
            &+ \lambda^2\left[\sum_{i\leq m < k}\left(\frac{\lambda+\lambda_k}{\lambda-\lambda_k}-\epsilon\right)h_{iik}^2\right]\label{Z2}\\
            &+ \sum_{i=1}^m\frac{\lambda_i}{1 + \lambda_i^2}\pd_{ii}^2\Theta - m\sum_{a=1}^n\frac{\lambda_a}{1 + \lambda_a^2}(\pd_a\Theta)\pd_a b_m\nonumber
        \end{align}
        where $\epsilon(n)$ will be fixed.

        \item[Step 2.] We will estimate each term in the above expression. For each fixed $k$ in the above expression, we set $t_i = h_{iik}$. For simplicity, we will denote
        \[
        H^k(x_0) = t_1(x_0) +\cdots + t_{n-1}(x_0) + t_n(x_0) = t'(x_0) + t_n(x_0)
        \]
        where $H^k$ denotes the $k$-th component of the mean curvature vector.

        \item[Step 2.1.] We first show that \eqref{Z1} can be bounded below by $-C(n,\nu_1)(1 + |Du(x_0)|^2)$. For each fixed $k\leq m$, we show that $[\;]_k \geq -C(n)\nu_1^2(1 + u^2_k(x_0))$. In the case that $\lambda_i\geq 0$ for all $i$, the proof follows directly. We will only consider the case $\lambda_{n-1}> 0 > \lambda_n$. For simplifying the notation, we assume $k=1$. From $t_n(x_0) = H^1(x_0) - t'(x_0)$ we observe:
        \begin{align*}
            \lambda^2[\;]_1 &= \lambda^2\left[(1-\epsilon)t_1^2 + \sum_{i=2}^m(3-\epsilon)t_i^2 + \sum_{i=m+1}^{n-1}\frac{2\lambda_i}{\lambda-\lambda_i}t_i^2 \right] + \frac{2\lambda_n}{\lambda-\lambda_n}t_n^2\\
            &= \lambda^2\left[(1-\epsilon)t_1^2 + \sum_{i=2}^m(3-\epsilon)t_i^2 + \sum_{i=m+1}^{n-1}\frac{2\lambda_i}{\lambda-\lambda_i}t_i^2 \right]\\
            &\quad + \lambda^2\frac{2\lambda_n}{\lambda-\lambda_n}[(H^1)^2 -2H^1t' + (t')^2]\\
            &\geq \lambda^2\left[(1-\epsilon)t_1^2 + \sum_{i=2}^m(3-\epsilon)t_i^2 + \sum_{i=m+1}^{n-1}\frac{2\lambda_i}{\lambda-\lambda_i}t_i^2\right]\\
            &\quad + \lambda^2\frac{2\lambda_n}{\lambda-\lambda_n}[(t')^2(1 + \eta)] + \lambda^2\frac{2\lambda_n}{\lambda-\lambda_n}[(H^1)^2(1 + \frac{1}{\eta})]
        \end{align*}
        where the last inequality follows from Young's inequality. Noting that $\frac{2\lambda_n}{\lambda-\lambda_n}\geq -\frac{2}{n}$ and using
        \[
        \lambda^2(H^1)^2= (\lambda g^{11}\pd_1\Theta)^2 = \left(\frac{\lambda_1}{1+\lambda_1^2}(\Theta_{x_1} + \Theta_uu_1 + \Theta_{u_1}\lambda_1)\right)^2\leq 3\nu_1^2(1 + u_1^2(x_0))
        \]
        we have
        \begin{align}
            \lambda^2[\;]_1&\geq  \lambda^2\left[(1-\epsilon)t_1^2 + \sum_{i=2}^m(3-\epsilon)t_i^2 + \sum_{i=m+1}^{n-1}\frac{2\lambda_i}{\lambda-\lambda_i}t_i^2\right]\nonumber\\
            &\qquad + \lambda^2\frac{2\lambda_n}{\lambda-\lambda_n}[(t')^2(1 + \eta)] - C(n)\nu_1^2(1 + u_1^2(x_0))(1 +\frac{1}{\eta})\nonumber\\
            &\geq \lambda^2\left[(1 - \epsilon)t_1^2 + \sum_{i=2}^m(3-\epsilon)t_i^2 + \sum_{i=m+1}^{n-1}\frac{2\lambda_i}{\lambda-\lambda_i}t_i^2 \right]\cdot\label{ts1}\\
            &\quad \left[1 + \frac{2(1 + \eta)\lambda_n}{\lambda - \lambda_n}\left(\frac{1}{1 -\epsilon} + \sum_{i=2}^m\frac{1}{3-\epsilon} + \sum_{i=m+1}^{n-1}\frac{\lambda-\lambda_i}{2\lambda_i}\right) \right]\label{ts2}\\
            &\quad- C(n)\nu_1^2( 1 + u_1^2(x_0))(1 +\frac{1}{\eta})\nonumber
        \end{align}
        where the last inequality follows from the Cauchy-Schwartz inequality. We have that \eqref{ts1} is positive choosing $\epsilon< 1$ and so it remains to find $\epsilon(n)$ such that \eqref{ts2} is positive. We observe
        \begin{align}
            &\left[1 + \frac{2(1 + \eta)\lambda_n}{\lambda - \lambda_n}\left(\frac{1}{1 -\epsilon} + \sum_{i=2}^m\frac{1}{3-\epsilon} + \sum_{i=m+1}^{n-1}\frac{\lambda-\lambda_i}{2\lambda_i}\right) \right]\nonumber\\
            &=\frac{2(1 + \eta)\lambda_n}{\lambda - \lambda_n}\left[\frac{\lambda - \lambda_n}{2(1 + \eta)\lambda_n} + \frac{1}{1 -\epsilon} +\frac{m-1}{3-\epsilon} + \sum_{i=m+1}^{n-1}\frac{\lambda-\lambda_i}{2\lambda_i}\right]\nonumber\\
            &= \frac{2(1 + \eta)\lambda_n}{\lambda - \lambda_n}\left[\frac{\lambda - \lambda_n}{2\lambda_n} - \frac{\lambda - \lambda_n}{2\lambda_n}\frac{\eta}{1+\eta}+ \frac{1}{1 -\epsilon} +\frac{m-1}{3-\epsilon} + \sum_{i=m+1}^{n-1}\frac{\lambda-\lambda_i}{2\lambda_i}\right]\nonumber\\
            &= \frac{2(1 + \eta)\lambda_n}{\lambda - \lambda_n}\left[ \frac{1}{1 -\epsilon} +\frac{m-1}{3-\epsilon} + \sum_{i=m+1}^{n}\frac{\lambda-\lambda_i}{2\lambda_i}\right] - \eta\nonumber\\
            &= \frac{2(1 + \eta)\lambda_n}{\lambda - \lambda_n}\left[ \frac{1}{1 -\epsilon} +\frac{m-1}{3-\epsilon} + \frac{\lambda}{2}\sum_{i=1}^{n}\frac{1}{\lambda_i} - \frac{n}{2}\right] - \eta\nonumber\\
            &= \frac{2(1 + \eta)\lambda_n}{\lambda - \lambda_n}\left[ \frac{1}{1 -\epsilon} +\frac{m-1}{3-\epsilon} + \frac{\lambda}{2}\frac{\sigma_{n-1}}{\sigma_n} - \frac{n}{2}\right] - \eta\nonumber\\
             &\geq \frac{2(1 + \eta)\lambda_n}{\lambda - \lambda_n}\left[ \frac{1}{1 -\epsilon} +\frac{m-1}{3-\epsilon} - \frac{n}{2}\right] - \eta \label{eta1}
        \end{align}
        where we used that $\lambda = \lambda_1 = \cdots=\lambda_m $ and Lemma \ref{y1}.
        
        Observe that finding an $\epsilon$ such that \eqref{eta1} is nonnegative, using $\frac{2\lambda_n}{\lambda-\lambda_n}\geq -\frac{2}{n}$, is equivalent to showing
        \[
        \frac{1}{1 -\epsilon} +\frac{m-1}{3-\epsilon} - \frac{n}{2} + \frac{\eta n}{4(1 + \eta)} \leq 0.
        \]
        Let
        \[
        a = \frac{n}{2} - \frac{\eta n}{4(1 + \eta)} = \frac{n}{4}\left(\frac{2+\eta}{1+\eta}\right),\qquad b = m-1.
        \]
        We get 
        \[
            \frac{1}{1 -\epsilon} +\frac{b}{3-\epsilon} - a \leq 0%
        \]
        which is equivalent to
        \[
        a\epsilon^2 - (4a - b- 1)\epsilon + (3a - b- 3) \geq 0. 
        \]
        
        The above function of $\epsilon$ has zeros at
        \begin{align*}
            \epsilon &= \frac{4a - b- 1 \pm \sqrt{(4a - b- 1)^2-4a(3a - b- 3)}}{2a}\\
            &= 2- \frac{2m}{n}\left(\frac{1+\eta}{2+\eta}\right) \pm\sqrt{\left(1 - \frac{2m}{n}\left(\frac{1+\eta}{2+\eta}\right)\right)^2 + \frac{8}{n}\left(\frac{1+\eta}{2+\eta}\right)}.
        \end{align*}
        Hence, we want to choose $\epsilon(n) >0$ such that
        \[
        \epsilon\leq 2- \frac{2m}{n}\left(\frac{1+\eta}{2+\eta}\right) - \sqrt{\left(1 - \frac{2m}{n}\left(\frac{1+\eta}{2+\eta}\right)\right)^2 + \frac{8}{n}\left(\frac{1+\eta}{2+\eta}\right)}.
        \]

        Let 
        \[
        f(m) = 2- \frac{2m}{n}\left(\frac{1+\eta}{2+\eta}\right) - \sqrt{\left(1 - \frac{2m}{n}\left(\frac{1+\eta}{2+\eta}\right)\right)^2 + \frac{8}{n}\left(\frac{1+\eta}{2+\eta}\right)}.
        \]
       We see that $f'(m) < 0$ and hence, it must be that
        \[
        f(n-1)\leq f(m) \leq f(1).
        \]
        By writing
        \[
        \frac{1+\eta}{2 +\eta} = \frac{1}{2}\left(1 + \frac{\eta}{2+\eta}\right)
        \]
        we have
        \[
            f(1) 
            = \frac{n-2}{n} - \frac{2}{n}\left(\frac{\eta}{2 + \eta}\right)
        \]
        and
        \begin{align*}
            f(n-1) &= 2 - \frac{n-1}{n}\left(1 + \frac{\eta}{2 + \eta}\right)\\
            &\hspace{.3in}- \sqrt{\left(1 - \frac{n-1}{n}\left(1 + \frac{\eta}{2 + \eta}\right) \right)^2 + \frac{4}{n}\left(1 + \frac{\eta}{2 + \eta}\right)}\\
            &= 1 -\frac{1}{n}\left(1 - (n-1)\frac{\eta}{2 + \eta} \right)\left[-1 +  \sqrt{1 + \frac{\frac{4}{n}\left(1 + \frac{\eta}{2+\eta}\right)}{\frac{1}{n^2}\left(1 - (n-1)\frac{\eta}{2 + \eta} \right)^2}}\;\;\right]\\
            &= 1 - \frac{\frac{\frac{4}{n}\left(1 + \frac{\eta}{2+\eta}\right)}{\frac{1}{n}\left(1 - (n-1)\frac{\eta}{2 + \eta} \right)}}{\sqrt{1 + \frac{\frac{4}{n}\left(1 + \frac{\eta}{2+\eta}\right)}{\frac{1}{n^2}\left(1 - (n-1)\frac{\eta}{2 + \eta} \right)^2}} + 1}\\
            &= 1 - \frac{\frac{4}{n}\left(1 + \frac{\eta}{2+\eta}\right)}{\sqrt{\frac{1}{n^2}\left(1 - (n-1)\frac{\eta}{2 + \eta} \right)^2 + \frac{4}{n}\left(1 + \frac{\eta}{2+\eta}\right)} + \frac{1}{n}\left(1 - (n-1)\frac{\eta}{2 + \eta} \right)}\\
            &= 1 - \frac{8( 1 + \eta)}{\sqrt{(2 - (n-2)\eta)^2 + 8n(1+\eta)(2 + \eta)} + 2 - (n-2)\eta}.
        \end{align*}
        In order to choose $0 < \epsilon(n) < f(n-1) \leq f(m)\leq f(1)$ we must first find an $\eta(n)$ such that
        \[
        \frac{8( 1 + \eta)}{\sqrt{(2 - (n-2)\eta)^2 + 8n(1+\eta)(2 + \eta)} + 2 - (n-2)\eta} < 1
        \]
        or rather
        \begin{align*}
            8(1+\eta) &< \sqrt{(2 - (n-2)\eta)^2 + 8n(1+\eta)(2 + \eta)} + 2 - (n-2)\eta\\
            (6+(n+6)\eta)^2 &< (2 - (n-2)\eta)^2 + 8n(1 + \eta)(2 + \eta)\\
            (8 + 8\eta)(4 + \eta(2n + 4)) &< 8n(1 + \eta)(2 + \eta)\\
            4 + \eta(2n + 4) &< n(2 + \eta)\\
            \eta & < \frac{2(n-2)}{n+4}.
        \end{align*}
        It suffices to choose $\eta < \frac{2}{7}$. We take $\eta = \frac{1}{4}$ and then choose $\epsilon(n)> 0$ such that
        \[
        \epsilon < 1-\frac{40}{\sqrt{n^2 + 340n + 100}+10-n} = 1 - \frac{40}{n\left(\sqrt{1 + \frac{340}{n}+\frac{100}{n^2}} - 1\right) + 10}.
        \]
        
        \item[Step 2.2] Now we show that \eqref{Z2} is nonnegative. For each $k$ such that $m < k < n$ we have $\lambda > \lambda_k > 0$ and hence $[\;]_k$ in \eqref{Z2} satisfies
        \begin{align*}
            [\;]_k &=\sum_{i=1}^m\left(\frac{\lambda+\lambda_k}{\lambda-\lambda_k} - \epsilon \right)t_i^2\\
            &\geq \sum_{i=1}^m(1-\epsilon)t_i^2 \geq 0
        \end{align*}
        for $\epsilon\leq 1$.
        \item[Step 2.3] For the $k=n$ term of \eqref{Z2}, we have
        \begin{align*}
            [\;]_n &= \sum_{i=1}^m\left(\frac{\lambda + \lambda_n}{\lambda - \lambda_n}-\epsilon \right)t_i^2\\
            &\geq \sum_{i=1}^m\left(\frac{n-2}{n}- \epsilon\right)t_i^2 \geq 0
        \end{align*}
        where the last line follows from Lemma \ref{y1} and choosing $\epsilon\leq \frac{n-2}{n}$.

        Altogether, we have shown that \eqref{Z1} and \eqref{Z2} are together bounded below by $-C(n)\nu_1^2(1+|Du(x_0)|^2)$ for $1\leq m \leq n-1$. That is,
        \begin{align}
            m(\Delta_g b_m - \epsilon(n)|\nabla_g b_m|^2)&\geq -C(n)\nu_1^2(1+|Du(x_0)|^2)\label{Z1Z2} \\
            &\quad + \sum_{i=1}^m\frac{\lambda_i}{1 + \lambda_i^2}\pd_{ii}^2\Theta - m\sum_{a=1}^n\frac{\lambda_a}{1 + \lambda_a^2}(\pd_a\Theta)\pd_a b_m.\label{Thet}
        \end{align}
        When $m=n$, we have that $\lambda_1 = \cdots=\lambda_n > 0$, and the same inequality holds.

        \item[Step 2.4] It remains to bound \eqref{Thet} from below.

        Note that for the phase $\Theta(x,u,Du)$, assuming the Hessian $D^2u$ is diagonalized at a point $x_0$, we get
\begin{align}
    \pd_i \Theta(x,u,Du) &= \Theta_{x_i} + \Theta_u u_i + \sum_k \Theta_{u_k}u_{ki} \label{dipsi}\\
    &\overset{x_0}{=} \Theta_{x_i} + \Theta_u u_i + \Theta_{u_i}\lambda_i. \nonumber
\end{align}
        Taking the $j$-th partial of \eqref{dipsi} we see that 
\begin{align*}
    \pd_{ij}\Theta(x,u,Du) &= \Theta_{x_ix_j} + \Theta_{x_i u}u_j + \sum_{r=1}^n \Theta_{x_iu_r}u_{rj}\nonumber\\
    & \qquad +\left(\Theta_{ux_j} + \Theta_{uu}u_j + \sum_{s=1}^n \Theta_{u u_s}u_{sj} \right)u_i + \Theta_u u_{ij}\nonumber\\
    & \qquad +\sum_{k=1}^n \left(\Theta_{u_kx_j} + \Theta_{u_ku}u_j + \sum_{t=1}^n \Theta_{u_ku_t}u_{tj}\right)u_{ki}+\sum_{k=1}^n \Theta_{u_k}u_{kij}\nonumber\\
    & \overset{x_0}{=} \Theta_{x_ix_j} + \Theta_{x_i u}u_j + \Theta_{x_iu_j}\lambda_j\label{dijpsi@p}\\
    & \qquad +\left(\Theta_{ux_j} + \Theta_{uu}u_j + \Theta_{u u_j}\lambda_j \right)u_i + \Theta_u \lambda_i\delta_{ij}\nonumber\\
    & \qquad +\left(\Theta_{u_ix_j} + \Theta_{u_iu}u_j + \Theta_{u_iu_j}\lambda_j\right)\lambda_i + \sum_{k=1}^n \Theta_{u_k}u_{kij}.\nonumber
\end{align*}
        
    Using the above, we see
        \begin{align}
            &\sum_{i=1}^m \frac{\lambda_i}{1 + \lambda_i^2}\pd_{ii}^2\Theta\nonumber\\
            &= \sum_{i=1}^m \frac{\lambda_i}{1 + \lambda_i^2}\Bigg[\Theta_{x_ix_i} + 2\Theta_{x_i u}u_i + 2\Theta_{x_iu_i}\lambda_i + 2\Theta_{uu_i}u_i\lambda_i + \Theta_{uu}u_i^2\\
            &\hspace{1.5in}+ \Theta_u\lambda_i + \Theta_{u_iu_i}\lambda_i^2 +\sum_{j=1}^n\Theta_{u_j}u_{jii}\Bigg]\nonumber\\
            &= \sum_{i=1}^m \frac{\lambda_i}{1 + \lambda_i^2}\Bigg[\Theta_{x_ix_i} + 2\Theta_{x_i u}u_i + 2\Theta_{x_iu_i}\lambda_i + 2\Theta_{uu_i}u_i\lambda_i + \Theta_{uu}u_i^2\\
            &\hspace{1.5in}+ \Theta_u\lambda_i + \Theta_{u_iu_i}\lambda_i^2\Bigg] +m\sum_{j=1}^n\Theta_{u_j}\pd_j b_m\nonumber.
        \end{align}

         Using \eqref{dipsi} we get
        \[
        m\sum_{a=1}^n\frac{\lambda_a}{1 + \lambda_a^2}(\pd_a\Theta)\pd_a b_m = m\sum_{a=1}^n\frac{\lambda_a}{1 + \lambda_a^2}(\Theta_{x_a} + \Theta_{u}u_a + \Theta_{u_a}\lambda_a)\pd_a b_m. 
        \]
        Hence, \eqref{Thet} becomes
        \begin{align}
            &\sum_{i=1}^m \frac{\lambda_i}{1 + \lambda_i^2}\pd_{ii}^2\Theta - m\sum_{a=1}^n\frac{\lambda_a}{1 + \lambda_a^2}(\pd_a\Theta)\pd_a b_m\nonumber \\
            &= \sum_{i=1}^m \frac{\lambda_i}{1 + \lambda_i^2}\Bigg[\Theta_{x_ix_i} + 2\Theta_{x_i u}u_i + 2\Theta_{x_iu_i}\lambda_i + 2\Theta_{uu_i}u_i\lambda_i + \Theta_{uu}u_i^2 + \Theta_u\lambda_i + \Theta_{u_iu_i}\lambda_i^2\Bigg]\label{Thetnu2}\\
            &\qquad + m\sum_{a=1}^n\frac{1}{1 + \lambda_a^2}(\Theta_{u_a}- \Theta_{x_a}\lambda_a - \Theta_{u}u_a\lambda_a)\pd_a b_m. \label{Thetnu1}
        \end{align}

        We bound \eqref{Thetnu2} below using the partial convexity of $\Theta(x,z,p)$ in $p$, by
        \begin{align}
            &\sum_{i=1}^m \frac{\lambda_i}{1 + \lambda_i^2}\Bigg[\Theta_{x_ix_i} + 2\Theta_{x_i u}u_i + 2\Theta_{x_iu_i}\lambda_i + 2\Theta_{uu_i}u_i\lambda_i + \Theta_{uu}u_i^2 + \Theta_u\lambda_i + \Theta_{u_iu_i}\lambda_i^2\Bigg]\nonumber\\
            &\geq \sum_{i=1}^m \frac{\lambda_i}{1 + \lambda_i^2}\Bigg[\Theta_{x_ix_i} + 2\Theta_{x_i u}u_i + 2\Theta_{x_iu_i}\lambda_i + 2\Theta_{uu_i}u_i\lambda_i + \Theta_{uu}u_i^2 + \Theta_u\lambda_i\Bigg]\nonumber\\
            &\geq -\sum_{i=1}^m \frac{\lambda_i}{1 + \lambda_i^2}\Bigg[|\Theta_{x_ix_i}| + 2|\Theta_{x_i u}u_i| + 2|\Theta_{x_iu_i}|\lambda_i + 2|\Theta_{uu_i}u_i|\lambda_i + |\Theta_{uu}|u_i^2 + |\Theta_u|\lambda_i\Bigg]\nonumber\\
            &\geq -C(n,\nu_1,\nu_2)( 1 + |Du(x_0)|^2)\label{Cnu2}
        \end{align}
        where the last line uses Young's inequality and that all the $\lambda_i> 0$ for $1\leq i\leq m$.

        Using Young's inequality we bound \eqref{Thetnu1} below by
        \begin{align}
            &m\sum_{a=1}^n\frac{1}{1 + \lambda_a^2}(\Theta_{u_a}- \Theta_{x_a}\lambda_a - \Theta_{u}u_a\lambda_a)\pd_a b_m\nonumber\\
            &\geq -m\sum_{a=1}^n \frac{1}{1+\lambda_a^2}(|\Theta_{u_a}|+ |\Theta_{x_a}\lambda_a| + |\Theta_{u}u_a\lambda_a|)|\pd_a b_m|\nonumber\\
            &\geq -\frac{\beta}{2}m^2|\nabla_g b_m|^2 - \frac{1}{2\beta}C(n,\nu_1)( 1 + |Du(x_0)|^2)\label{Cnu1}.
        \end{align}
        Combining \eqref{Z1Z2}, \eqref{Cnu2}, and \eqref{Cnu1}, we have
        \[
        m(\Delta_g b_m - (\epsilon(n) - \frac{\beta m}{2})|\nabla_g b_m|^2) \geq - C(n,\nu_1,\nu_2)(1 + \frac{1}{\beta})( 1 + |Du(x_0)|^2).
        \]
        Take $\beta = \epsilon(n)/m$ and let $c(n)= \epsilon(n)/2$ to get
        \[
        m(\Delta_g b_m - c(n)|\nabla_g b_m|^2) \geq - C(n,\nu_1,\nu_2)( 1 + |Du(x_0)|^2).
        \]
    \end{itemize}
\end{proof}

\begin{corollary}\label{scor}
    Let $u$ be a smooth solution to \eqref{s} in $B_1(0)\subset\re^n$ where $|\Theta|\geq (n-2)\frac{\pi}{2}$ and $n\geq 3$. Assuming the Hessian $D^2u$ is diagonalized at $x_0\in B_1(0)$, \eqref{ptJI} holds with $C= C(n,s_2)(1+|Du(x_0)|^2)$.
\end{corollary}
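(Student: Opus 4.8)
The plan is to read the corollary off the proof of Lemma \ref{ptJ}. Equation \eqref{s} is the special case of \eqref{slag} with $\Theta(x,z,p)=s_1+s_2(x\cdot p-2z)$, which is affine in both $z$ and $p$; in particular $\Theta_{p_ip_j}\equiv 0$, so partial convexity in $p$ holds (trivially), and the hypothesis $|\Theta|\ge(n-2)\tfrac{\pi}{2}$ is in force. The only reason Lemma \ref{ptJ} cannot be quoted verbatim is that this $\Theta$ does not satisfy \eqref{struct} with absolute constants $\nu_1,\nu_2$: $\Theta_x=s_2p$ is unbounded as $|Du|\to\infty$. But \eqref{ptJI} is a pointwise statement at $x_0$, and in the proof $\Theta$ enters only through $\partial_i\Theta(x_0)$ and $\partial^2_{ij}\Theta(x_0)$, equivalently through
\[
\Theta_x(x_0)=s_2\,Du(x_0),\qquad \Theta_z=-2s_2,\qquad \Theta_p(x_0)=s_2x_0,\qquad \Theta_{xp}(x_0)=s_2I_n,
\]
with all other first- and second-order derivatives vanishing. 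So I would re-run Steps 1--2.4 of the proof of Lemma \ref{ptJ} with these values, using $|x_0|\le1$, and check that each bound of the form $C(\nu_1,\nu_2,n)(1+|Du(x_0)|^2)$ becomes $C(n,s_2)(1+|Du(x_0)|^2)$.

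Carrying this out: in Step 2.1 one has $\partial_1\Theta(x_0)=\Theta_{x_1}+\Theta_uu_1+\Theta_{u_1}\lambda=-s_2u_1+s_2(x_0)_1\lambda$, so using $|\lambda/(1+\lambda^2)|\le\tfrac12$, $\lambda^2/(1+\lambda^2)\le1$ and $|(x_0)_1|\le1$,
\[
\lambda^2(H^1)^2=\Big(\tfrac{\lambda}{1+\lambda^2}\,\partial_1\Theta(x_0)\Big)^2\le Cs_2^2\big(1+u_1^2(x_0)\big),
\]
which plays the role of the factor $3\nu_1^2(1+u_1^2(x_0))$; the remaining, purely $n$-dependent choice of $\eta$ and $\epsilon(n)$ making \eqref{ts1}--\eqref{ts2} nonnegative is unchanged, and Steps 2.2--2.3 contain no $\Theta$-terms at all. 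In Step 2.4 the bracket in \eqref{Thetnu2} collapses, since $\Theta_{x_ix_i}=\Theta_{x_iu}=\Theta_{uu_i}=\Theta_{uu}=\Theta_{u_iu_i}=0$ and $2\Theta_{x_iu_i}\lambda_i+\Theta_u\lambda_i=2s_2\lambda_i-2s_2\lambda_i=0$, so \eqref{Thetnu2} vanishes identically (partial convexity is not even invoked). In \eqref{Thetnu1} the coefficient is $\Theta_{u_a}-\Theta_{x_a}\lambda_a-\Theta_uu_a\lambda_a=s_2(x_0)_a+s_2u_a\lambda_a$, so $\tfrac{1}{1+\lambda_a^2}|\,\cdot\,|\le|s_2|(1+\tfrac12|u_a(x_0)|)$ and the Young's-inequality absorption in \eqref{Cnu1} goes through with $C(n,\nu_1)$ replaced by $C(n,s_2)$. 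Taking $\beta=\epsilon(n)/m$ and $c(n)=\epsilon(n)/2$ exactly as in Lemma \ref{ptJ} then gives \eqref{ptJI} with $C=C(n,s_2)(1+|Du(x_0)|^2)$.

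The only place that needs care is bookkeeping the $\Theta_x$-terms: since $\Theta_x=s_2Du$ is not bounded, one must verify term-by-term that wherever it occurs it is paired with a factor $\lambda/(1+\lambda^2)$ or $1/(1+\lambda^2)$ --- never an uncompensated power of $\lambda$ --- so that the growth it produces in $|Du(x_0)|$ is exactly the $(1+|Du(x_0)|^2)$ already permitted on the right-hand side of \eqref{ptJI}. Beyond that routine check, I anticipate no genuine obstacle; the corollary is essentially Lemma \ref{ptJ} with the structure constants made explicit for the shrinker/expander phase.
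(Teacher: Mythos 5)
Your proposal is correct and is essentially the same argument as the paper's: compute the specific derivatives of $\Theta(x,z,p)=s_1+s_2(x\cdot p-2z)$, observe that \eqref{Thetnu2} cancels to zero (the $2\Theta_{x_iu_i}\lambda_i=2s_2\lambda_i$ term is killed by $\Theta_u\lambda_i=-2s_2\lambda_i$), reduce \eqref{Thetnu1} to $m\sum_k\frac{s_2}{1+\lambda_k^2}(x_k+u_k\lambda_k)\partial_k b_m$, and absorb via Young's inequality with $|x_0|\le 1$. Your extra care in Step 2.1 — checking that the potentially unbounded $\Theta_x=s_2Du$ only ever appears multiplied by $\lambda/(1+\lambda^2)$ or $1/(1+\lambda^2)$, so that its contribution is exactly of the form $C(n,s_2)(1+|Du(x_0)|^2)$ — is the right bookkeeping and makes explicit what the paper leaves implicit when it writes $C(n,\nu_1)$ in the intermediate line of the corollary's proof.
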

\begin{proof}
    Let $x_0\in B_1$. As $\Theta(x,u,Du) = s_1 + s_2(x\cdot Du - 2u)$, we get that
    \[
    \begin{matrix}
        \Theta_{x_i}= s_2u_i & \Theta_{x_ix_j}= 0 & \Theta_{x_iu}= 0&\Theta_{x_iu_j}=s_2\delta_{ij}\\
        \Theta_u = -2s_2 & \Theta_{ux_j} = 0& \Theta_{uu} = 0 &\Theta_{uu_j}=0\\
        \Theta_{u_i} = s_2x_i & \Theta_{u_ix_j}= s_2\delta_{ij} & \Theta_{u_iu}= 0 & \Theta_{u_iu_j}= 0.
    \end{matrix}
    \]
    Hence \eqref{Thetnu2} becomes zero and \eqref{Thetnu1} becomes
    \begin{equation*}
        m\sum_{k=1}^n\frac{s_2}{1+ \lambda_k^2}\left(x_k + u_k\lambda_k\right)\pd_k b_m.
    \end{equation*}
    Applying Young's inequality and simplifying, we get
    \begin{align*}
        m(\Delta_g b_m - c(n)|\nabla_g b_m|^2_g) &\geq -\frac{n^2s_2^2}{2\epsilon(n)}\left(|x_0|^2 + |Du(x_0)|^2\right) - C(n,\nu_1)(1 + |Du(x_0)|^2) \\
        &\geq 
        -C(n,s_2)(1+|Du(x_0)|^2).
    \end{align*}
\end{proof}

\begin{corollary}\label{tcor}
    Let $u$ be a smooth solution to \eqref{tran} in $B_1(0)\subset\re^n$ with $|\Theta|\geq (n-2)\frac{\pi}{2}$ and $n\geq 3$. Assuming the Hessian $D^2u$ is diagonalized at $x_0\in B_1(0)$, \eqref{ptJI} holds with $C= C(n,t_2,t_3)$.
\end{corollary}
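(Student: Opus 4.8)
The plan is to proceed exactly as in the proof of Corollary~\ref{scor}: substitute the derivatives of $\Theta$ coming from \eqref{tran} into \eqref{Thetnu2}, \eqref{Thetnu1}, and into Step~2.1 of the proof of Lemma~\ref{ptJ}, and then check that every occurrence of the factor $1+|Du(x_0)|^2$ in that proof collapses to a pure constant. For $\Theta(x,z,p)=t_1+t_2\cdot x+t_3\cdot p$ we have
\[
\begin{matrix}
\Theta_{x_i}=(t_2)_i & \Theta_{x_ix_j}=0 & \Theta_{x_iu}=0 & \Theta_{x_iu_j}=0\\
\Theta_u=0 & \Theta_{ux_j}=0 & \Theta_{uu}=0 & \Theta_{uu_j}=0\\
\Theta_{u_i}=(t_3)_i & \Theta_{u_ix_j}=0 & \Theta_{u_iu}=0 & \Theta_{u_iu_j}=0,
\end{matrix}
\]
so $\nu_2=0$ and $\Theta$ is trivially partially convex in $p$, and Lemma~\ref{ptJ} applies. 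The structural point is that $\Theta$ has no $z$-dependence and only linear, constant-coefficient dependence on $x$ and $p$: the terms in the proof of Lemma~\ref{ptJ} that produced $1+|Du(x_0)|^2$ all carried a $\Theta_u$, a $\Theta_{uu}$, or an undamped power of $u_i$, all of which now vanish.

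Concretely, I would first note that \eqref{Thetnu2} is identically zero, and that \eqref{Thetnu1} reduces to $m\sum_{a=1}^n\frac{1}{1+\lambda_a^2}\big((t_3)_a-(t_2)_a\lambda_a\big)\pd_a b_m$. Since $\frac{((t_3)_a-(t_2)_a\lambda_a)^2}{1+\lambda_a^2}\le 2|t_2|^2+2|t_3|^2$, Young's inequality (with the same parameter $\beta$ as in Step~2.4) bounds this from below by $-\tfrac{\beta}{2}m^2|\nabla_g b_m|^2-\tfrac{C(n,t_2,t_3)}{\beta}$, with no $|Du(x_0)|$ term. Next I would revisit Step~2.1, whose sole use of $|Du(x_0)|$ was the estimate of $\lambda^2(H^k)^2=\big(\lambda_k g^{kk}\pd_k\Theta\big)^2$: for \eqref{tran} one has $\pd_k\Theta=(t_2)_k+(t_3)_k\lambda_k$ at $x_0$, hence
\[
\lambda^2(H^k)^2=\left(\frac{\lambda_k(t_2)_k+\lambda_k^2(t_3)_k}{1+\lambda_k^2}\right)^2\le \tfrac12|t_2|^2+2|t_3|^2=:C(t_2,t_3),
\]
a pure constant. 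Substituting this for $3\nu_1^2(1+u_1^2(x_0))$ in \eqref{ts1}--\eqref{ts2} and keeping the same choices $\eta=\tfrac14$ and $\epsilon(n)$, the terms \eqref{Z1} and \eqref{Z2} are together bounded below by $-C(n,t_2,t_3)$.

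Combining these two estimates exactly as in Step~2.4 of the proof of Lemma~\ref{ptJ} — taking $\beta=\epsilon(n)/m$ and $c(n)=\epsilon(n)/2$ — yields $m\big(\Delta_g b_m-c(n)|\nabla_g b_m|^2\big)\ge -C(n,t_2,t_3)$, i.e.\ \eqref{ptJI} with $C=C(n,t_2,t_3)$. I do not expect a genuine obstacle; the only care needed is the bookkeeping to confirm that no other term reintroduces a factor of $|Du(x_0)|$, which is automatic since $\Theta$ is $z$-independent and every $\lambda_a$ generated by a $\Theta_{u_a}\lambda_a$ is damped by the companion factor $\tfrac{1}{1+\lambda_a^2}$. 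Thus Corollary~\ref{tcor} is a sharpened special case of Lemma~\ref{ptJ}, the sharpening being possible precisely because \eqref{tran} has no explicit $u$-dependence and no superlinear $x$- or $p$-dependence.
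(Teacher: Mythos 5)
Your proof is correct and, in fact, establishes a stronger bound than the paper's own proof of Corollary~\ref{tcor}. The paper simply substitutes the derivatives of the translator phase into \eqref{Thetnu2} and \eqref{Thetnu1} and then imports the generic bound \eqref{Z1Z2} on the curvature terms \eqref{Z1}--\eqref{Z2} from Lemma~\ref{ptJ}, which still carries a $C(n,\nu_1)(1+|Du(x_0)|^2)$ contribution; its displayed final line therefore reads $-C(n,t_2,t_3)(1+|Du(x_0)|^2)$, despite the corollary's statement advertising the cleaner constant $C(n,t_2,t_3)$. You instead reopen Step~2.1 of the proof of Lemma~\ref{ptJ} and observe that the only source of gradient dependence there is the bound on $\lambda^2(H^k)^2 = \big(\lambda g^{kk}\partial_k\Theta\big)^2$: since the translator phase has $\Theta_u=0$ and $\partial_k\Theta=(t_2)_k+(t_3)_k\lambda_k$ at $x_0$, the companion factor $g^{kk}=\tfrac{1}{1+\lambda_k^2}$ damps every $\lambda_k$ and yields $\lambda^2(H^k)^2\le \tfrac12|t_2|^2+2|t_3|^2$, a pure constant. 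Carrying this through \eqref{ts1}--\eqref{ts2} with the same $\eta=\tfrac14$, $\epsilon(n)$, together with the vanishing of \eqref{Thetnu2} and the same Young-inequality treatment of \eqref{Thetnu1}, you arrive at $m(\Delta_g b_m - c(n)|\nabla_g b_m|^2)\ge -C(n,t_2,t_3)$ with no $|Du(x_0)|$ term at all. This is a genuine sharpening that the paper's two-line proof does not achieve; it actually makes the stated constant in Corollary~\ref{tcor} literally accurate, whereas the paper's own chain of inequalities only gives the constant up to a factor of $1+|Du(x_0)|^2$.
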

\begin{proof}
    As $\Theta(x,u,Du) = t_1 + t_2\cdot x + t_3\cdot Du$, we get 
    \[
    \Theta_{x_i} = t_{2,i} \quad\text{ and }\quad \Theta_{u_i} = t_{3,i}
    \]
    where all the remaining derivatives are zero.
    Hence \eqref{Thetnu2} is zero and \eqref{Thetnu1} becomes
    \begin{equation*}
        m\sum_{k=1}^n\frac{1}{1+ \lambda_k^2}\left(t_{3,k} - t_{2,k}\lambda_k\right)\pd_kb_m.
    \end{equation*}
    Applying Young's inequality and simplifying, we get
    \begin{align*}
        m(\Delta_g b_m - c(n)|\nabla_g b_m|^2_g )&\geq -\frac{n^2}{2\epsilon(n)}\left(|t_2|^2 + |t_3|^2\right) - C(n,\nu_1)(1 + |Du(x_0)|^2)\\
        &\geq -C(n,t_2,t_3)(1 + |Du(x_0)|^2).
    \end{align*}
\end{proof}

\begin{corollary}\label{rcor}
    Let $u$ be a smooth solution to \eqref{rotator} in $B_1(0)\subset\re^n$ with $|\Theta|\geq (n-2)\frac{\pi}{2}$ and $n\geq 3$. Assuming the Hessian $D^2u$ is diagonalized at $x_0\in B_1(0)$, \eqref{ptJI} holds with $C= C(n,r_2)(1 + |Du(x_0)|^2)$.
\end{corollary}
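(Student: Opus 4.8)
The plan is to follow the template of Corollaries~\ref{scor} and~\ref{tcor}: substitute the explicit rotator phase $\Theta(x,z,p)=r_1+\frac{r_2}{2}(|x|^2+|p|^2)$ into the identity \eqref{Thet}$\,=\,$\eqref{Thetnu2}$\,+\,$\eqref{Thetnu1} established within the proof of Lemma~\ref{ptJ}, and show that the $\Theta$-dependent remainder is bounded below by $-C(n,r_2)(1+|Du(x_0)|^2)$ up to a term absorbable into $c(n)|\dg b_m|^2$. First I would record the needed derivatives of $\Theta$: with $p$ the gradient slot, $\Theta_{x_i}=r_2x_i$, $\Theta_{u_i}=r_2u_i$, $\Theta_{x_ix_i}=\Theta_{u_iu_i}=r_2$, and every remaining first- or second-order derivative of $\Theta$ vanishes, in particular all derivatives involving the $z$ variable and all mixed $x$--$p$ derivatives. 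Since $\Theta_{pp}=r_2I_n$, the phase is partially convex in $p$ exactly when $r_2\ge0$, so I would carry out the argument for $r_2>0$; the case $r_2<0$ reduces to it via the reflection $u\mapsto-u$, which reverses the sign of $r_2$ and leaves $|D^2u|$, $|Du(x_0)|$, $\osc_{B_1}(u)$, and $|\Theta|$ unchanged.

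With $r_2>0$, I would evaluate the two pieces. In the bracket of \eqref{Thetnu2} only $\Theta_{x_ix_i}=r_2$ and $\Theta_{u_iu_i}\lambda_i^2=r_2\lambda_i^2$ survive, so that piece equals
\[
\sum_{i=1}^m\frac{\lambda_i}{1+\lambda_i^2}\,r_2(1+\lambda_i^2)=r_2\sum_{i=1}^m\lambda_i=mr_2\lambda\;\ge\;0,
\]
using $\lambda_1=\cdots=\lambda_m=\lambda$ at $x_0$ and $\lambda_i>0$ for $1\le i\le m$ from Lemma~\ref{y1}(1). For \eqref{Thetnu1}, substitution gives
\[
m\sum_{a=1}^n\frac{1}{1+\lambda_a^2}\bigl(\Theta_{u_a}-\Theta_{x_a}\lambda_a-\Theta_u u_a\lambda_a\bigr)\pd_a b_m=r_2\,m\sum_{a=1}^n\frac{u_a-\lambda_a x_a}{1+\lambda_a^2}\,\pd_a b_m,
\]
and I would estimate this exactly as in the two preceding corollaries: apply Cauchy--Schwarz against $|\dg b_m|^2=\sum_a(1+\lambda_a^2)^{-1}(\pd_a b_m)^2$, use the pointwise bound $(u_a-\lambda_a x_a)^2/(1+\lambda_a^2)\le 2u_a^2+2x_a^2$ together with $|x_0|<1$, and then Young's inequality to split the outcome into $\frac12\beta m|\dg b_m|^2+C(n,r_2)\beta^{-1}(1+|Du(x_0)|^2)$.

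Combining these with \eqref{Z1Z2} and choosing $\beta$ so that the $|\dg b_m|^2$ contribution does not exceed $\frac12\epsilon(n)m|\dg b_m|^2$ --- just as in the closing lines of the proof of Lemma~\ref{ptJ} --- produces $m(\Dg b_m-c(n)|\dg b_m|^2)\ge-C(n,r_2)(1+|Du(x_0)|^2)$, i.e.\ \eqref{ptJI} with the asserted constant. The one feature absent from \eqref{s} and \eqref{tran} is the genuinely gradient-dependent term $\Theta_{u_iu_i}\lambda_i^2=r_2\lambda_i^2$, which is unbounded in $\lambda_i$ and can neither be dropped nor absorbed on its own; the point is that it pairs with $\Theta_{x_ix_i}=r_2$ into $r_2(1+\lambda_i^2)$, which exactly cancels the weight $(1+\lambda_i^2)^{-1}$ and leaves the harmless $r_2\sum_{i\le m}\lambda_i$, nonnegative by Lemma~\ref{y1}(1) when $r_2>0$. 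Verifying that this cancellation is exact, and keeping the Young's-inequality bookkeeping tight enough to reach the clean constant $C(n,r_2)(1+|Du(x_0)|^2)$, is essentially all there is to the (short) proof.
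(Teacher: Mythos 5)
Your proof follows the same route as the paper's: substitute the explicit rotator derivatives $\Theta_{x_ix_j}=r_2\delta_{ij}$, $\Theta_{u_iu_j}=r_2\delta_{ij}$, $\Theta_{x_i}=r_2x_i$, $\Theta_{u_i}=r_2u_i$ (all others zero) into \eqref{Thetnu2} and \eqref{Thetnu1}, observe the cancellation $\frac{\lambda_i}{1+\lambda_i^2}\,r_2(1+\lambda_i^2)=r_2\lambda_i$ (which the paper writes as dropping the nonnegative sum $\sum_{a\le m}\frac{\lambda_a}{1+\lambda_a^2}[r_2+r_2\lambda_a^2]$), and then close \eqref{Thetnu1} with Young's inequality using $|x_0|<1$. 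The only substantive addition is your claim that the case $r_2<0$ reduces to $r_2>0$ by the reflection $u\mapsto-u$, and that claim does not hold as stated. The reflection sends $\Theta\mapsto-\Theta$ as well; the eigenvalue structure in Lemma~\ref{y1} and the identity $\lambda_1=\cdots=\lambda_m>0$ underlying Lemma~\ref{ptJ} require $\Theta\geq(n-2)\frac{\pi}{2}$, not merely $|\Theta|\geq(n-2)\frac{\pi}{2}$. So if one first normalizes to $\Theta\geq(n-2)\frac{\pi}{2}$ (possibly flipping $u$ once, which already fixes the sign of $r_2$) and then finds $r_2<0$, a second flip would spoil the sign of $\Theta$. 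Moreover, $b_m$ is built from the \emph{largest} $m$ eigenvalues, and after $u\mapsto-u$ the relevant quantity becomes a sum over the smallest eigenvalues, so the Jacobi inequality for the reflected potential does not transfer to the original $b_m$. The paper sidesteps this by tacitly assuming $r_2\geq0$ in the proof; your version matches it exactly on that range, but the reflection trick should be dropped or replaced with an explicit hypothesis $r_2\geq0$.
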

\begin{proof}
    Let $x_0\in B_1$. As $\Theta(x,u,Du) = r_1 + \frac{r_2}{2}(|x|^2 + |Du|^2)$, we get 
    \[
    \begin{matrix}
        \Theta_{x_i}= r_2x_i & \Theta_{x_ix_j}= r_2\delta_{ij} &\Theta_{x_iu_j}=0\\
        \Theta_{u_i} = r_2u_i & \Theta_{u_ix_j}= 0  & \Theta_{u_iu_j}= r_2\delta_{ij}.
    \end{matrix}
    \]
    Then \eqref{Thetnu2} and \eqref{Thetnu1} are bounded below by
    \begin{align*}
        \sum_{a=1}^m\frac{\lambda_a}{1+\lambda_a^2}\bigg[r_2 +&r_2\lambda_a^2\bigg]+ m\sum_{k=1}^n\frac{r_2}{1+ \lambda_k^2}\left(u_k - x_k\lambda_k\right)\pd_kb_m\\
        &\geq m\sum_{k=1}^n\frac{r_2}{1+ \lambda_k^2}\left(u_k - x_k\lambda_k\right)\pd_kb_m
    \end{align*}
    since $r_2\geq 0$ and $\lambda_a\geq0$ for all $1\leq a\leq m$. Thus, using Young's inequality and simplifying, we get
    \begin{align*}
        m(\Delta_g b_m - c(n)|\nabla_g b_m|^2_g) &\geq -\frac{n^2r_2^2}{2\epsilon(n)}\left(|x_0|^2 + |Du(x_0)|^2\right)- C(n,\nu_1)(1 + |Du(x_0)|^2) \\
        &\geq -C(n,r_2)(1 + |Du(x_0)|^2).
    \end{align*}
\end{proof}

We now extend the above Jacobi inequalities to the following integral form.
\begin{proposition}\label{IntJI}
    Let $u$ be a smooth solution to \eqref{slag} in $\re^n$ with $n\geq 3$ and $|\Theta|\geq (n-2)\frac{\pi}{2}$ where $\Theta(x,z,p)$ is partially convex in the $p$ variable. Let
    \begin{equation*}
        b=b_1 = \log\sqrt{1 + \lambda_{\max{}}^2}
    \end{equation*}
    where $\lambda_{\max{}}$ is the largest eigenvalue of $D^2u$, i.e. $\lambda_{\max{}} = \lambda_1 \geq \lambda_2\geq\cdots\geq\lambda_n$. Then, for all non-negative $\phi\in C_0^\infty(B_R)$, $b$ satisfies the integral Jacobi inequality
    \begin{equation*}
        \int_{B_R}-\langle\nabla_g \phi, \nabla_g b\rangle_g\;dv_g \geq c(n) \int_{B_R} \phi|\nabla_g b|^2 dv_g - \int_{B_R}C(n,\nu_1,\nu_2)(1 + |Du(x)|^2)\phi dv_g.
    \end{equation*}
    From this, it follows that, for $r < R$, 
    \begin{equation*}
        \int_{B_r}|\nabla_g b|^2dv_g \leq \left(\frac{C(n)}{R-r} + C(n,\nu_1,\nu_2)(1 + ||Du||^2_{L^\infty(B_R)}) \right)\int_{B_R}\;dv_g.
    \end{equation*}
\end{proposition}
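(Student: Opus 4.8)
The plan is to pass from the pointwise Jacobi inequality of Lemma~\ref{ptJ} to its integral (weak) form and then choose a suitable test function to extract the $L^2$ bound on $\nabla_g b$. The subtlety is that $b = b_1 = \log\sqrt{1+\lambda_{\max}^2}$ is only Lipschitz, not $C^2$, at points where the top eigenvalue has multiplicity $>1$: there $b = b_m$ for the appropriate $m$, and the two functions agree to first order but not necessarily to second order. So I would first record that $b = \max_m b_m$ on the open set where $\lambda_1$ is simple the function is smooth and the pointwise inequality \eqref{ptJI} with $m=1$ applies verbatim; on the closed singular set, $b$ coincides with $b_m$ for the multiplicity $m$ of $\lambda_{\max}$, and Lemma~\ref{ptJ} gives \eqref{ptJI} for that $b_m$. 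Since $b = \max_m b_m$ is a maximum of smooth functions each satisfying the same differential inequality $\Delta_g b_m \geq c(n)|\nabla_g b_m|^2 - C(1+|Du|^2)$ with $|\nabla_g b_m| = |\nabla_g b|$ wherever $b_m = b$ (by \eqref{dgln} the gradient only depends on $\lambda_{\max}$ and the $h_{\gamma\gamma k}$), the standard fact that a maximum of subsolutions is a subsolution in the viscosity/distributional sense yields
\[
\int_{B_R} -\langle \nabla_g\phi,\nabla_g b\rangle_g\,dv_g \;\geq\; c(n)\int_{B_R}\phi|\nabla_g b|^2\,dv_g - \int_{B_R} C(n,\nu_1,\nu_2)(1+|Du|^2)\phi\,dv_g
\]
for all $0 \leq \phi \in C_0^\infty(B_R)$. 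I expect this passage — justifying the integration by parts across the non-smooth locus — to be the main obstacle; it can be handled either by the max-of-subsolutions argument above or, more carefully, by the Lewy--Yuan-type observation that the singular set has measure zero and $b$ is semiconcave modulo the $C(1+|Du|^2)$ term, so Kato's inequality applies.

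For the second statement I would test the integral inequality with $\phi = \eta^2$, where $\eta \in C_0^\infty(B_R)$ is a standard cutoff with $\eta \equiv 1$ on $B_r$, $0 \leq \eta \leq 1$, and $|\nabla_g \eta| \leq |D\eta|_{\mathrm{eucl}} \cdot \|g^{-1}\|^{1/2} \leq C/(R-r)$ (using $g \geq I_n$, so $|\nabla_g\eta|^2 = g^{ij}\eta_i\eta_j \leq |D\eta|^2 \leq C/(R-r)^2$). Then $-\langle\nabla_g\phi,\nabla_g b\rangle_g = -2\eta\langle\nabla_g\eta,\nabla_g b\rangle_g \leq 2\eta|\nabla_g\eta||\nabla_g b| \leq \tfrac{c(n)}{2}\eta^2|\nabla_g b|^2 + \tfrac{2}{c(n)}|\nabla_g\eta|^2$ by Young's inequality, so absorbing the first term gives
\[
\frac{c(n)}{2}\int_{B_R}\eta^2|\nabla_g b|^2\,dv_g \;\leq\; \frac{2}{c(n)}\int_{B_R}|\nabla_g\eta|^2\,dv_g + C(n,\nu_1,\nu_2)\int_{B_R}(1+|Du|^2)\eta^2\,dv_g.
\]
Bounding $|\nabla_g\eta|^2 \leq C(n)/(R-r)^2$ and $(1+|Du|^2)\eta^2 \leq 1 + \|Du\|_{L^\infty(B_R)}^2$, restricting the left-hand integral to $B_r$ where $\eta \equiv 1$, and dividing by $c(n)/2$ yields
\[
\int_{B_r}|\nabla_g b|^2\,dv_g \;\leq\; \left(\frac{C(n)}{(R-r)^2} + C(n,\nu_1,\nu_2)\big(1 + \|Du\|_{L^\infty(B_R)}^2\big)\right)\int_{B_R} dv_g,
\]
which (after relabeling $(R-r)^2 \to (R-r)$, or simply noting the stated form is weaker when $R-r \leq 1$, as will be the case in applications) is the claimed inequality. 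The only points requiring care beyond routine Young/cutoff estimates are: (i) the weak formulation across the singular set, discussed above; and (ii) checking that the constant $c(n) = \epsilon(n)/2$ from Lemma~\ref{ptJ} is genuinely independent of $m$ — which it is, since $\epsilon(n)$ was chosen in Step~2.1 to work uniformly for all $1 \leq m \leq n-1$ via the monotonicity $f(n-1) \leq f(m) \leq f(1)$, and the $m=n$ case was noted to satisfy the same bound.
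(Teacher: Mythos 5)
Your proposal takes essentially the same route as the paper: establish that $b$ is a viscosity subsolution of the Jacobi inequality by reducing the singular points of $b$ to points where some $b_m$ is smooth and touches $b$ from below to first order, upgrade to a distributional subsolution, integrate by parts, and then run the standard cutoff-plus-Young argument. The paper phrases the viscosity step more carefully than you do: it tests against a quadratic $P$ touching $b$ from above at $x_0$, notes that $P \geq b \geq b_k$ with equality at $x_0$ (where $k$ is the multiplicity of $\lambda_{\max}$ there), applies the pointwise Jacobi inequality to the locally smooth $b_k$, and then invokes Ishii's theorem (\cite{Ish}) to pass from viscosity to distributional subsolution. Your ``max of subsolutions'' framing is the same idea in spirit, but as stated it is not quite a direct appeal to a stock theorem, since the $b_m$ are not globally smooth (each $b_m$ with $m<n$ is only smooth where $\lambda_m>\lambda_{m+1}$), so one really does need the local touching argument rather than a literal max-of-smooth-subsolutions lemma; be explicit about this if you write it up. Your alternative via semiconcavity and Kato's inequality is plausible but is not what the paper does and would need its own verification.

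On the second part, your computation is correct and in fact a bit sharper than the paper's stated bound: testing with $\phi^2$ and $|D\phi|\leq 2/(R-r)$ produces $C(n)/(R-r)^2$, not $C(n)/(R-r)$. The paper's displayed constant $C(n)/(R-r)$ appears to be a harmless slip; in the actual application in the proof of the main theorem one takes $R-r=1$, so the two forms coincide there. Your parenthetical remark about which form is weaker has the direction reversed when $R-r\leq 1$ (then $1/(R-r)^2 \geq 1/(R-r)$, so the paper's stated form is the \emph{stronger} one), but this does not affect anything in the argument. Your observation in point (ii), that $c(n)=\epsilon(n)/2$ from Lemma~\ref{ptJ} is uniform in $m$ because of the monotonicity $f(n-1)\leq f(m)\leq f(1)$, is correct and worth noting explicitly.
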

\begin{remark}
    This holds via the same proof for solutions to \eqref{s},\eqref{tran},\eqref{rotator} each with their respective $C$.
\end{remark}

\begin{proof} The proof follows from \cite{AB}. We present it here for the sake of completion.
    If $b$ is smooth, then the integral Jacobi inequality follows immediately from the pointwise Jacobi inequality \eqref{ptJI}. However, $b$ is only Lipschitz in terms of entries of $D^2u$ and since $u$ is smooth in $x$, we have $b$ is only Lipschitz in $x$. We want to show that \eqref{ptJI} holds in the viscosity sense. 

    Let $x_0\in B_R(0)$ and let $P$ be a quadratic polynomial such that $P(x_0) = b(x_0)$ and elsewhere
    \[
    P(x)\geq b(x).
    \]
    If $x_0$ is a smooth point of $b$, then via \eqref{ptJI} with $m=1$ we get
    \[
    \Delta_g P \geq c(n)|\nabla_g P|^2 - C(n,\nu_1,\nu_2)(1 + |Du(x_0)|^2)
    \]
    otherwise, $\lambda_1$ is not distinct at $x_0$. 
    If it is not distinct at $x_0$ and we have $\lambda_1 = \cdots=\lambda_k > \lambda_{k+1}$, then the function $b_k$ will be smooth at $x_0$ from Lemma \ref{ptJ}. Observe that we have
    \[
    P(x) \geq b(x) \geq b_k(x) \text{ with equality holding at $x_0$.}
    \]
    Applying \eqref{ptJI} with $b_k$, we see that at $x_0$ we still have
    \[
    \Delta_g P \geq c(n)|\nabla_g P|^2 - C(n,\nu_1,\nu_2)(1 + |Du(x_0)|^2).
    \]
    This shows that \eqref{ptJI} holds in the viscosity sense. By applying the result of Ishii \cite[Theorem 1]{Ish}, the viscosity subsolution $b$ to \eqref{ptJI} is also a distribution subsolution. We can then integrate by parts against $\phi \in C^\infty_0(B_R)$ to get 
    \begin{align*}
        -\int_{B_R}\langle\nabla_g\phi,\nabla_gb\rangle_g dv_g &= \int_{B_R}\phi \Delta_g b \; dv_g\\
        &\geq c(n)\int_{B_R}\phi|\nabla_g b|^2\;dv_g - \int_{B_R}C(n,\nu_1,\nu_2)(1 + |Du(x)|^2)\;dv_g.
    \end{align*}
    Let $0< r< R$. Choose $0\leq \phi \leq 1$ with $\phi=1$ on $B_r$ and $|D\phi|\leq \frac{2}{R-r}$ in $B_R$, and use the above inequality to get
    \begin{align*}
        \int_{B_r}|\nabla_g b|^2dv_g&\leq \int_{B_R}\phi^2|\nabla_g b|^2dv_g\\
        &\leq \frac{4}{c(n)^2}\int_{B_R}|\nabla_g\phi|^2dv_g + \frac{2}{c(n)}\int_{B_R} \phi^2 C(n,\nu_1,\nu_2)(1 + |Du(x_0)|^2)\; dv_g\\
        &\leq \left(\frac{C(n)}{R-r} + C(n,\nu_1,\nu_2)(1 + ||Du||^2_{L^\infty(B_R)}) \right)\int_{B_R}\;dv_g.
    \end{align*}
\end{proof}

Observe that the results proved in this section hold good when $\Theta$ is critical and supercritical.
However, the remainder of the proof in this paper does not hold at the critical value and requires $\Theta$
to be supercritical. 

\section{Lewy-Yuan rotation}\label{LYr}

We use a modified version of the Lewy-Yuan rotation adapted to the supercritical phase case \cite[Section 4]{CW2}. Let $\delta \in (0,\pi/2)$ and choose $\alpha < \delta$. We rotate our gradient graph downwards by an angle of  $e^{-i\alpha}$:
\begin{equation}\label{lyrot}
\begin{cases}
     \bar{x} = \cos(\alpha) x + \sin(\alpha) Du(x)\\
    \bar{y} = D\bar{u} = -\sin(\alpha) x + \cos(\alpha) Du(x).
\end{cases}
\end{equation} 

Upon rotating we have 
\[
-\cot(\delta - \alpha)I_n \leq D^2\bar{u}\leq \cot(\alpha)I_n.
\]
As $\alpha < \delta$ we have $\cot(\delta) < \cot(\alpha)$. So there exists some $\eta$ such that $\cot(\delta) = \cot(\alpha) - \eta$. We then have 
\[
u + \cot(\delta)\frac{|x|^2}{2} = u + (\cot(\alpha) - \eta)\frac{|x|^2}{2} 
\]
is convex. By \cite[Prop 4.1]{CW2} under the rotation the new coordinates exist in a ball of radius
\[
\bar{R} \geq \sin(\alpha)\eta R.
\]
We take $\alpha = \delta/2$. We have 
\begin{equation}\label{dbaru}
-\cot(\delta/2)I_n \leq D^2\bar{u}\leq \cot(\delta/2)I_n,
\end{equation}
and hence,
\begin{equation}\label{barg}
   d\bar{x}^2 \leq g(\bar{x}) \leq (1 + \cot^2(\delta/2))^\frac{n}{2} d\bar{x}^2 = \csc^n(\delta/2)d\bar{x}^2
\end{equation}
where our coordinates exist in a radius of
\[
\bar{R}\geq \frac{1}{2\cos(\delta/2)}R.
\]
We define 
\[
\bar{\Omega}_r = \bar{x}(B_r(0))
\]
and see that
\begin{equation}\label{rhodr}
 |\bar{x}|\leq \cos(\delta/2)r + \sin(\delta/2)||Du||_{L^\infty(B_r)}\leq r + ||Du||_{L^\infty(B_r)}=\rho(r).   
\end{equation} 

We also observe that
\[
\text{dist}(\bar{\Omega}_r,\pd\bar{\Omega}_{R}) \geq \frac{R-r}{2\cos(\delta/2)}\geq \frac{R-r}{\sqrt{2}}.
\]

\begin{proposition}\label{Iso}
    Let $u$ be a smooth semi-convex function with $D^2u\geq -\cot(\delta)I_n$ on $B_{R'}(0)\subset \re^n$. Let $f$ be a smooth positive function on the Lagrangian surface $X = (x,Du(x))$. Let $0<r< R < R'$ be such that $R-r > 2\epsilon$. Then
    \[
    \left[\int_{B_r}|(f-\tilde{f})^+|^\frac{n}{n-1}\;dv_g \right]^\frac{n-1}{n} \leq C(n)\left(\frac{\csc^2(\delta/2)\rho^2}{r\epsilon}\right)^{(n-1)}\int_{B_R}|\nabla_g(f-\tilde{f})^+|\;dv_g
    \]
    where $\rho= \rho(R')$ is as defined in \eqref{rhodr}, and
    \[
    \tilde{f} = \frac{2}{|B_r|}\int_{B_R(0)} f\;dx.
    \]
\end{proposition}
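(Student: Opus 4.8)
The plan is to transfer the standard Euclidean Sobolev/isoperimetric inequality from the rotated coordinates $\bar x$ back to the original coordinates and metric $g$, exploiting the uniform two-sided Hessian bound \eqref{dbaru} that the Lewy-Yuan rotation provides. First I would apply the Lewy-Yuan rotation of Section \ref{LYr} with $\alpha = \delta/2$: since $D^2u \geq -\cot(\delta) I_n$ on $B_{R'}$, the rotated potential $\bar u$ satisfies $-\cot(\delta/2) I_n \leq D^2\bar u \leq \cot(\delta/2) I_n$ on the image domain, which by \eqref{barg} gives the crucial metric equivalence $d\bar x^2 \leq g(\bar x) \leq \csc^n(\delta/2)\, d\bar x^2$. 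Under the rotation the map $x \mapsto \bar x = \cos(\delta/2) x + \sin(\delta/2) Du(x)$ is a diffeomorphism onto $\bar\Omega$, and the gradient graph over $\bar x$ is the same Lagrangian surface $X$, so $f$ and its mean $\tilde f$ are unchanged; by \eqref{rhodr} the image $\bar\Omega_{R'}$ sits inside $B_\rho(0)$ in $\bar x$-space with $\rho = \rho(R')$.

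Next I would work in the $\bar x$ coordinates with the Euclidean metric. Set $w = (f - \tilde f)^+$, a nonnegative Lipschitz function on $X$, which we regard as a function of $\bar x$ on $\bar\Omega$, extended by zero. The key point is that $\tilde f$ is (up to the factor $2$) the Euclidean average of $f$ over $B_r(0)$ in the \emph{original} $x$-variable; since $R - r > 2\epsilon$ and $dv_g \geq d\bar x$, one checks that the set where $w = 0$ occupies a definite fraction of the ball $B_{\rho}$ (or of a suitable sub-ball), so that a Poincaré–Sobolev inequality with the sharp geometric constant applies to $w$ on that ball. Concretely, I would invoke the Euclidean inequality
\[
\Big(\int_{\bar\Omega_r} w^{\frac{n}{n-1}}\, d\bar x\Big)^{\frac{n-1}{n}} \leq C(n)\, \Big(\frac{\rho^2}{r\,\epsilon}\Big)^{n-1} \int_{\bar\Omega_R} |\nabla_{\bar x} w|\, d\bar x,
\]
where the constant degenerates like the stated power of $\rho^2/(r\epsilon)$ because the support of $w$ may be a thin region of a ball of radius $\sim\rho$ while we only control the measure of $\{w=0\}$ from below by something of order $r^n$ (this is where the ratio $\rho/r$ and the margin $\epsilon$ enter — via a Sobolev inequality for functions vanishing on a set of controlled measure, iterated or obtained from the relative isoperimetric inequality on balls).

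Finally I would convert back to the metric $g$. On the left, $dv_g \leq \csc^n(\delta/2) d\bar x$ gives $\int_{B_r} w^{n/(n-1)} dv_g \leq \csc^n(\delta/2)\int_{\bar\Omega_r} w^{n/(n-1)} d\bar x$, costing a factor $\csc^{n-1}(\delta/2)$ after taking the $\frac{n-1}{n}$ power. On the right, $|\nabla_g w|^2 = g^{ij} w_i w_j \geq \csc^{-n}(\delta/2)|\nabla_{\bar x} w|^2$ and $dv_g \geq d\bar x$, so $\int_{\bar\Omega_R}|\nabla_{\bar x} w| d\bar x \leq \csc^{n/2\cdot ?}(\delta/2)\int_{B_R}|\nabla_g w| dv_g$; combining the two and absorbing all powers of $\csc(\delta/2)$ into the factor $(\csc^2(\delta/2)\rho^2/(r\epsilon))^{n-1}$ yields the claim. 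The main obstacle I anticipate is the second step: getting the Euclidean Sobolev inequality with precisely the constant $C(n)(\rho^2/(r\epsilon))^{n-1}$ — i.e. tracking how the isoperimetric constant blows up when the function is only known to vanish on a subset of definite measure inside a much larger ball, and checking that the condition $R - r > 2\epsilon$ together with $dv_g \geq d\bar x$ indeed forces $|\{w = 0\}\cap B_\rho|$ to be bounded below appropriately. The metric-comparison bookkeeping in steps one and three is routine once \eqref{barg} is in hand.
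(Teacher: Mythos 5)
Your overall plan matches the paper's: pass to the Lewy--Yuan rotated coordinates, exploit the uniform metric equivalence \eqref{barg}, and reduce to a Euclidean inequality on $\bar\Omega\subset B_\rho$. Where you diverge is in step two: you want to invoke, as a black box, a ``Poincar\'e--Sobolev inequality for functions vanishing on a set of controlled measure'' with constant $C(n)(\rho^2/(r\epsilon))^{n-1}$, and you explicitly flag that you are not sure how to obtain that precise constant. That flagged uncertainty is exactly where the paper's actual work lives, so the proposal defers rather than supplies the key step. The paper does not apply a Sobolev inequality directly to $w=(f-\tilde f)^+$; instead it runs a layer-cake argument over the superlevel sets $A_t=\{f>t\}$, proves the \emph{per-level-set} estimate \eqref{fiso} from the relative isoperimetric Lemma~\ref{IsoLem} (the result of \cite{BW}, generalizing \cite{CWY}, where the $(\rho/\epsilon)^n$ degeneracy is established), and then reassembles the left-hand side via the layer-cake representation, the H--L--P integral inequality, and the co-area formula. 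Filling in your black box in full generality would most naturally require reconstructing exactly this chain.

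Two more specific corrections. First, the source of the ``vanishing set'' you need is misattributed: it is the factor of $2$ in $\tilde f=\frac{2}{|B_r|}\int_{B_R}f\,dx$ that forces, for every $t>\tilde f$, the inequality $|\{f>t\}\cap B_r|<\tfrac12|B_r|$ (paper's \eqref{Atrot}, proved by the short averaging contradiction); the conditions $R-r>2\epsilon$ and the volume comparison enter only into the geometric constant in Lemma~\ref{IsoLem}, not into the measure lower bound. Second, note that the level-set route only needs volume comparison ($dv_g\le\csc^n(\delta/2)\,d\bar x$ and $\det g\ge 1$) and the one-sided perimeter comparison $|\cdot|_{\mathrm{euc}}\le|\cdot|_g$, together with the co-area formula taken in the metric $g$; it never needs the pointwise gradient comparison $|\nabla_{\bar x}w|\lesssim\csc^{n/2}(\delta/2)|\nabla_g w|$ that you were uncertain about (the unresolved exponent with the ``?''). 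So the paper's route is both the proof of the lemma you want to cite and a cleaner bookkeeping of the $\csc(\delta/2)$ powers. If you do want to pursue your direct-Sobolev formulation, you should either prove that inequality (essentially by running the same level-set argument) or locate a precise reference with the constant's dependence on the ratio of the vanishing-set measure to the containing ball and on the boundary margin $\epsilon$.
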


We first state \cite[Lemma 4.1]{BW}
, which is a generalization of \cite[Lemma 3.2]{CWY}.
\begin{lemma}\label{IsoLem}
    Let $\Omega_1\subset\Omega_2\subset B_{\rho}\subset\re^n$ and $\epsilon>0$. Suppose that dist$(\Omega_1,\pd\Omega_2)\geq 2\epsilon$; $A$ and $A^c$ are disjoint measurable sets such that $A\cup A^c = \Omega_2$. Then
    \[
        \min\{|A\cap\Omega_1|,|A^c\cap \Omega_1|\}\leq C(n)\frac{\rho^n}{\epsilon^n}|\pd A\cap \pd A^c|^\frac{n}{n-1}.
    \]
\end{lemma}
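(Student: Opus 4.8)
\emph{Proof plan.} This is a relative isoperimetric inequality, and I would prove it by the classical covering route: cover $\Omega_1$ by balls of radius comparable to $\epsilon$ that are forced by the distance hypothesis to lie inside $\Omega_2$, apply the standard relative isoperimetric inequality in each such ball, and sum up. We may assume $\Omega_1\neq\emptyset$ and $V:=\mathcal{H}^{n-1}(\partial A\cap\partial A^c)<\infty$ (otherwise there is nothing to prove), and, after interchanging $A\leftrightarrow A^c$, that $|A\cap\Omega_1|\leq|A^c\cap\Omega_1|$, so the left-hand side equals $|A\cap\Omega_1|$. A preliminary observation: at every point of the essential boundary $\partial^eA$ lying in $\operatorname{int}\Omega_2$ both $A$ and $A^c=\Omega_2\setminus A$ have positive upper density, hence $\partial^eA\cap\operatorname{int}\Omega_2\subseteq\partial A\cap\partial A^c$; so $\mathcal{H}^{n-1}(\partial^eA\cap\operatorname{int}\Omega_2)\leq V<\infty$, and by Federer's criterion $A$ has locally finite perimeter in $\operatorname{int}\Omega_2$ with $P(A;U)=\mathcal{H}^{n-1}(\partial^*A\cap U)\leq\mathcal{H}^{n-1}(\partial A\cap\partial A^c\cap U)$ for every open $U\subseteq\operatorname{int}\Omega_2$.

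Next I would fix the covering. By the Vitali covering lemma applied to $\{B_{\epsilon/10}(x):x\in\Omega_1\}$, choose a finite set $x_1,\dots,x_N\in\Omega_1$ with the balls $B_{\epsilon/10}(x_i)$ pairwise disjoint and $\Omega_1\subseteq\bigcup_iB_{\epsilon/2}(x_i)$. Since $\operatorname{dist}(\Omega_1,\partial\Omega_2)\geq2\epsilon$, each open ball $B_{2\epsilon}(x_i)$ misses $\partial\Omega_2$ and contains the interior point $x_i$, hence $B_{2\epsilon}(x_i)\subseteq\operatorname{int}\Omega_2$; in particular $B_\epsilon(x_i)\subseteq\Omega_2$ and $2\epsilon\leq\rho$. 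Because the $B_{\epsilon/10}(x_i)$ are disjoint and lie in $B_{2\rho}$ we get $N\leq C(n)(\rho/\epsilon)^n$, and the enlarged balls $B_\epsilon(x_i)$ have bounded overlap, so $\sum_i\mathcal{H}^{n-1}(\partial A\cap\partial A^c\cap B_\epsilon(x_i))\leq C(n)V$. Finally, since $\Omega_1$ is connected and covered by the $B_{\epsilon/2}(x_i)$ (each of which meets $\Omega_1$), the graph on $\{1,\dots,N\}$ with an edge between $i$ and $j$ whenever $B_{\epsilon/2}(x_i)\cap B_{\epsilon/2}(x_j)\neq\emptyset$ is connected — otherwise $\Omega_1$ would split as a disjoint union of nonempty relatively open pieces.

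In each ball $B_\epsilon(x_i)$ apply the standard relative isoperimetric inequality (with a purely dimensional constant, e.g. via the $BV$ Poincar\'e--Sobolev inequality $\|v-\bar v\|_{L^{n/(n-1)}(B)}\leq C(n)|Dv|(B)$ with $v=\chi_A$, both sides scaling identically on balls): $\min\{|A\cap B_\epsilon(x_i)|,|A^c\cap B_\epsilon(x_i)|\}\leq C(n)P(A;B_\epsilon(x_i))^{n/(n-1)}\leq C(n)V_i^{n/(n-1)}$, where $V_i:=\mathcal{H}^{n-1}(\partial A\cap\partial A^c\cap B_\epsilon(x_i))$. Call $i$ \emph{red} if $|A\cap B_\epsilon(x_i)|\leq|A^c\cap B_\epsilon(x_i)|$, \emph{blue} otherwise. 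Summing over red indices and using $n/(n-1)>1$ together with bounded overlap,
\[
\sum_{i\ \mathrm{red}}|A\cap B_\epsilon(x_i)|=\sum_{i\ \mathrm{red}}\min\{\,\cdot\,\}\leq C(n)\sum_iV_i^{\frac{n}{n-1}}\leq C(n)\Big(\sum_iV_i\Big)^{\frac{n}{n-1}}\leq C(n)V^{\frac{n}{n-1}}.
\]
If every ball is red, this already bounds $|A\cap\Omega_1|\leq\sum_i|A\cap B_\epsilon(x_i)|$. If every ball is blue, then on each ball $\min\{\,\cdot\,\}=|A^c\cap B_\epsilon(x_i)|$, so the same computation gives $|A^c\cap\Omega_1|\leq C(n)V^{n/(n-1)}$, and $|A\cap\Omega_1|\leq|A^c\cap\Omega_1|$ finishes it. In the remaining mixed case, connectedness of the graph yields an edge joining a red index $i$ and a blue index $j$; then $|x_i-x_j|<\epsilon$, so $B_\epsilon(x_i)\cup B_\epsilon(x_j)\subseteq B_{2\epsilon}(x_j)\subseteq\operatorname{int}\Omega_2$, and since the $A$-mass of the blue ball and the $A^c$-mass of the red ball are each $\geq\tfrac12|B_\epsilon|$,
\[
\min\{\,|A\cap B_{2\epsilon}(x_j)|,\ |A^c\cap B_{2\epsilon}(x_j)|\,\}\ \geq\ \tfrac12|B_\epsilon|\ =\ c(n)\epsilon^n.
\]
The relative isoperimetric inequality in the ball $B_{2\epsilon}(x_j)$ then forces $c(n)\epsilon^{n-1}\leq C(n)P(A;B_{2\epsilon}(x_j))\leq C(n)V$, i.e. $\epsilon^n\leq C(n)V^{n/(n-1)}$. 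Hence the blue balls contribute at most $N\,|B_\epsilon|\leq C(n)(\rho/\epsilon)^n\epsilon^n\leq C(n)(\rho/\epsilon)^nV^{n/(n-1)}$, and adding the red contribution (using $\rho\geq\epsilon$) gives $|A\cap\Omega_1|\leq C(n)(\rho^n/\epsilon^n)V^{n/(n-1)}$, as claimed.

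The one genuine obstacle is the minimum in the ball-wise relative isoperimetric inequality: in a ball where $A$ is the majority phase it gives no control on $|A\cap B_\epsilon(x_i)|$. The red/blue dichotomy resolves this — the majority (blue) balls are few, at most $C(n)(\rho/\epsilon)^n$, and their very existence in a mixed configuration forces, via connectedness, a neighbouring minority (red) ball and hence a definite amount of interface, $|\partial A\cap\partial A^c|\gtrsim\epsilon^{n-1}$, exactly enough to absorb their total volume. Connectedness of $\Omega_1$ is used at this step and is available in the intended application (where $\Omega_1,\Omega_2$ are balls, or, after the Lewy--Yuan rotation of Section~\ref{LYr}, diffeomorphic images of balls); without it the statement fails, as a two-component example shows.
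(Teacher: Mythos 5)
Your argument is essentially correct, but note that the paper itself does not prove this lemma: it is quoted from \cite[Lemma 4.1]{BW} (a generalization of \cite[Lemma 3.2]{CWY}), so there is no in-paper proof to compare against step by step; what you have written is a self-contained substitute for that citation. Your covering route checks out quantitatively: the Vitali family $B_{\epsilon/10}(x_i)$ with $5$-fold dilates covering $\Omega_1$, the inclusion $B_{2\epsilon}(x_i)\subset\mathrm{int}\,\Omega_2$ forced by the distance hypothesis, the count $N\leq C(n)(\rho/\epsilon)^n$ and bounded overlap of the $B_\epsilon(x_i)$, Federer's criterion turning $\mathcal{H}^{n-1}(\partial A\cap\partial A^c)<\infty$ into the perimeter bound $P(A;U)\leq\mathcal{H}^{n-1}(\partial A\cap\partial A^c\cap U)$, the ball-wise relative isoperimetric inequality, the elementary inequality $\sum_i V_i^{n/(n-1)}\leq(\sum_i V_i)^{n/(n-1)}$ for the red sum, and, in the mixed case, the lower bound $P(A;B_{2\epsilon}(x_j))\geq c(n)\epsilon^{n-1}$ coming from an adjacent red--blue pair, which is exactly what is needed to absorb the blue volume $N|B_\epsilon|\leq C(n)\rho^n$ into $C(n)(\rho/\epsilon)^n V^{n/(n-1)}$.

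The one substantive point is the connectedness of $\Omega_1$, which you use to produce the red--blue edge and which is not among the stated hypotheses. You are right that some such hypothesis is genuinely necessary: taking $\Omega_2$ to be two well-separated balls of radius $3\epsilon$ inside $B_\rho$, $\Omega_1$ the two concentric balls of radius $\epsilon/2$, and $A$, $A^c$ the two components makes the right-hand side vanish while the left-hand side is positive, so the lemma as literally quoted fails; a thin dumbbell shows that connectedness of $\Omega_2$ alone would not suffice either, so connectedness of $\Omega_1$ is the right extra assumption. In the paper's application (Proposition \ref{Iso}) one has $\Omega_1=\bar{\Omega}_r=\bar{x}(B_r)$ and $\Omega_2=\bar{x}(B_R)$, continuous images of balls under the Lewy--Yuan rotation, hence connected, so the added hypothesis costs nothing there; but if your proof is to replace the citation, the statement should be amended to include ``$\Omega_1$ connected'' (or the sets specialized to the rotated images of balls), in line with whatever hypothesis \cite{BW} actually imposes.
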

\begin{proof}[Poof of Proposition \ref{Iso}] For the sake of completion we add a proof tailored to the supercritical phase $|\Theta|\geq (n-2)\frac{\pi}{2}+\delta$.
Let $M=||f||_{L^\infty(B_r)}$. If $M\leq \ti{f}$, then $(f-\ti{f})^+=0$ on $B_r$, and hence, the left hand side is zero, from which the result follows immediately. We assume $\ti{f}< M$. By the Morse-Sard Lemma \cite[Lemma 13.15]{maggi}, \cite{sard}, $\{x | f(x) = t\}$ is $C^1$ for almost all $t \in (\ti{f},M)$. We first show that for such $t$,
\begin{equation}\label{fiso}
   |\{x | f(x)> t\}\cap B_r|_g\leq C(n)\frac{\csc^{2n}(\delta/2)\rho^{2n}}{r^n\epsilon^{n}}|\{x | f(x) = t\}\cap B_R|_g^\frac{n}{n-1}. 
\end{equation}
Note $|\cdot|_g$ is the metric with respect to $g$, and $|\cdot|$ is the Euclidean metric.

Let $t>\ti{f}$. It must be that
\[
    \frac{|B_r|}{2} > |\{x | f(x) > t\}\cap B_r|
\]
since otherwise
\[
M = \frac{2}{|B_r|}\int_0^M\frac{|B_r|}{2}dt\leq \frac{2}{|B_r|}\int_0^M|\{x|f(x)>t\}\cap B_r|dt \leq \frac{2}{|B_r|}\int_{B_R}fdx=\ti{f} < M.
\]
From this, it follows
\begin{equation}\label{Atrot}
    |\{x | f(x) \leq t\}\cap B_r| > \frac{|B_r|}{2}.
\end{equation}
If $|A_t\cap \bar{\Omega}_r|\leq|A_t^c\cap \bar{\Omega}_r|$, then via \eqref{barg} and Lemma \ref{IsoLem},
\begin{align*}
    |A_t\cap\bar{\Omega}_r|_{g(\bar{x})}&\leq \csc^n(\delta/2)|A_t\cap\bar{\Omega}_r|\\
    &\leq C(n)\csc^n(\delta/2)\frac{\rho^n}{\epsilon^n}|\pd A_t\cap \pd A_t^c|_{g(\bar{x})}^\frac{n}{n-1}.
\end{align*}

On the other hand, if $|A_t\cap \bar{\Omega}_r|>|A_t^c\cap\bar{\Omega}_r|$, from \eqref{Atrot}, we have 
\[
|B_r| < 2\csc^n(\delta/2)|A_t^c\cap\bar{\Omega}_r|
\]
and so 
\[
|A_t\cap\bar{\Omega}_r|\leq \frac{\rho^n}{r^n}|B_r|\leq 2\csc^n(\delta/2)\frac{\rho^n}{r^n}|A_t^c\cap\bar{\Omega}_r|.
\]
Therefore, via Lemma \ref{IsoLem},
\[
|A_t\cap\bar{\Omega}_r|_{g(\bar{x})} \leq 2\csc^n(\delta/2)\frac{\rho^n}{r^n}|A_t^c\cap\bar{\Omega}_r|\leq C(n)\csc^{2n}(\delta/2)\frac{\rho^{2n}}{r^n\epsilon^n}|\pd A_t\cap \pd A_t^c|_{g(\bar{x})}^\frac{n}{n-1}.
\]
In either case, we have
\begin{equation*}
    |A_t\cap\bar{\Omega}_r|_{g(\bar{x})}\leq C(n)\csc^{2n}(\delta/2)\frac{\rho^{2n}}{r^n\epsilon^n}|\pd A_t\cap \pd A_t^c|_{g(\bar{x})}^\frac{n}{n-1}
\end{equation*}
which in our original coordinates is \eqref{fiso}.

Thus, we get
\begin{align*}
    \bigg[\int_{B_r}&|(f-\ti{f})^+|^\frac{n}{n-1}dv_g\bigg]^\frac{n-1}{n}\\
    &=\left[\int_0^{M-\ti{f}}|\{x | f(x) - \ti{f}> t\}\cap B_r|_g dt^\frac{n}{n-1}\right]^\frac{n-1}{n} \\
    &\leq \int_0^{M-\ti{f}}|\{ x | f(x) - \ti{f}> t\}\cap B_r|^\frac{n-1}{n}_g dt\\ 
    &\leq C(n)\left(\frac{\csc^2(\delta/2)\rho^2}{r\epsilon}\right)^{(n-1)}\int_{\ti{f}}^M|\{x| f(x) = t\}\cap B_R|_g dt \text{ via \eqref{fiso}}\\
    &\leq C(n)\left(\frac{\csc^2(\delta/2)\rho^2}{r\epsilon}\right)^{(n-1)}\int_{B_R}|\nabla_g(f - \ti{f})^+|dv_g 
\end{align*}
The first equality is via the Layer cake \cite[Ex 1.13]{maggi}; the line after that is via the H-L-P inequality \cite[(5.3.3)]{LinYang}; and the last inequality comes from the co-area formula \cite[Thm 4.2.1]{LinYang}. This completes the proof.
\end{proof}

\section{Gradient estimate}

By Lemma \ref{y1},  $D^2u \geq -\cot(\delta)I_n$. Set $\tilde{u} = u + \cot(\delta)\frac{|x|^2}{2}$. From this we have $D^2\tilde{u}\geq 0$, so $\tilde{u}$ is convex. As $\tilde{u}$ is convex, we have
\[
||D\tilde{u}||_{L^\infty(B_r(0))}\leq \osc_{B_{r+1}(0)}\tilde{u}.
\]
That is,
\begin{equation*}
    ||Du + \cot(\delta)x||_{L^\infty(B_r(0))}\leq \osc_{B_{r+1}(0)}(u) + \cot(\delta)\frac{(r+1)^2}{2}.
\end{equation*}
It follows that
\begin{align}
    ||Du||_{L^\infty(B_r(0))}&= ||Du + \cot(\delta)x-\cot(\delta)x||_{L^\infty(B_r(0))}\nonumber\\
    &\leq ||Du + \cot(\delta)x||_{L^\infty(B_r(0))} + || \cot(\delta)x||_{L^\infty(B_r(0))}\nonumber\\
    &\leq \osc_{B_{r+1}(0)}(u) + \cot(\delta)\frac{(r+1)^2}{2} + \cot(\delta)r\nonumber\\
    &= \osc_{B_{r+1}(0)}(u) +\frac{\cot(\delta)}{2}(r^2 + 4r + 1). \label{gradest}
\end{align}
\section{Proof of the main results}
We prove Theorem \ref{main1} from which Theorem \ref{main0} follows.

\begin{proof} Let $n\geq 3$. To simplify the remaining proof's notation, we take $R = 2n+2$ where $u$ is a solution on $B_{2n+2}\subset\re^n$. Then by scaling $v(x) = \frac{u(\frac{R}{2n+2}x)}{(\frac{R}{2n+2})^2}$, we get the estimate in Theorem \ref{main1}. We denote $C=C(n,\nu_1,\nu_2)(1 + ||Du||^2_{L^\infty(B_{2n+1})})$ the positive constant from Lemma \ref{ptJ}.
\begin{enumerate}
    \item[Step 1.] We use the rotated Lagrangian graph $X=(\bar{x},D\bar{u}(\bar{x}))$ via the Lewy-Yuan rotation, illustrated in Section \ref{LYr}. Consider $b = \sqrt{1 + \lambda_{\max{}}^2}$ on the manifold $X=(x,Du(x))$. In the rotated coordinates, $b(\bar{x})$ weakly satisfies
    \begin{align}
    \bigg(g^{ij}(\bar{x})\frac{\pd^2}{\pd\bar{x}_i\pd\bar{x}_j}-& g^{jp}(\bar{x})\frac{\pd \Theta(x(\bar{x}),u(x(\bar{x})), \sin(\delta/2)\bar{x} + \cos(\delta/2) D\bar{u}(\bar{x}))}{\pd\bar{x}_q}\frac{\pd^2 \bar{u}(\bar{x})}{\pd \bar{x}_q\pd\bar{x}_p} \frac{\pd}{\pd \bar{x}_j}\bigg) b(\bar{x})\nonumber\\
    &= \Delta_{g(\bar{x})}b(\bar{x}) \geq - C. \label{subsol}
\end{align}
The nondivergence and divergence elliptic operator are both uniformly elliptic due to \eqref{dbaru}.

From \eqref{lyrot}, we have
\[
\begin{cases}
    x(\bar{x}) = \cos(\delta/2)\bar{x} - \sin(\delta/2)D\bar{u}(\bar{x})\\
    Du(x(\bar{x})) = \sin(\delta/2)\bar{x} + \cos(\delta/2)D\bar{u}(\bar{x})
\end{cases}
\]
from which it follows that
\begin{align}
    &\frac{\pd \Theta(x(\bar{x}),u(x(\bar{x})), \sin(\delta/2)\bar{x} + \cos(\delta/2)D\bar{u}(\bar{x}))}{\pd\bar{x}_q}\frac{\pd^2 \bar{u}(\bar{x})}{\pd \bar{x}_q\pd\bar{x}_p}\nonumber\\
    &= \sum_{j=1}^n\Theta_{x_j}\frac{\pd x_j}{\pd \bar{x}_q}\bar{\lambda}_q + \Theta_u\sum_{j=1}^n u_j\frac{\pd x_j}{\pd\bar{x}_q}\bar{\lambda}_q + \sum_{j=1}^n\Theta_{u_j}\frac{\pd}{\pd\bar{x}_q}\bigg(\sin(\delta/2)\bar{x}_j + \cos(\delta/2)\bar{u}_j\bigg)\bar{\lambda}_q\nonumber\\
    &= \Theta_{x_q}\bigg(\cos(\delta/2) - \sin(\delta/2)\bar{\lambda}_q\bigg)\bar{\lambda}_q \nonumber\\
    &\qquad + \Theta_{u}u_q\bigg(\cos(\delta/2) - \sin(\delta/2)\bar{\lambda}_q \bigg)\bar{\lambda}_q\nonumber\\
    &\qquad +  \Theta_{u_q}\bigg(\sin(\delta/2) + \cos(\delta/2)\bar{\lambda}_q \bigg)\bar{\lambda}_q\nonumber\\
    &\leq |\Theta_{x_q}|\bigg(\cos(\delta/2) + \sin(\delta/2)\cot(\delta/2)\bigg)\cot(\delta/2) \nonumber\\
    &\qquad + |\Theta_{u}u_q|\bigg(\cos(\delta/2) + \sin(\delta/2)\cot(\delta/2) \bigg)\cot(\delta/2)\nonumber\\
    &\qquad +  |\Theta_{u_q}|\bigg(\sin(\delta/2) + \cos(\delta/2)\cot(\delta/2) \bigg)\cot(\delta/2)\nonumber\\
    &= 2\cos(\delta/2)\cot(\delta/2)\bigg(|\Theta_{x_q}| + |\Theta_{u}u_q| \bigg) + \frac{\cot(\delta/2)}{\sin(\delta/2)}|\Theta_{u_q}|\nonumber\\
    &\leq 4\csc^2(\delta/2)\nu_1(1 + |u_q|).\label{d1bound}
\end{align}

We denote
\[
\tilde{b} = \frac{2}{|B_1(0)|}\int_{B_{2n}(0)}b(x)\;dx.
\]
Via the local mean value property of nonhomogeneous subsolutions \cite[Theorem 9.20]{GT}, (see \cite[Appendix, Theorem 6.1]{BW}), 
we get the following, from \eqref{subsol} and \eqref{d1bound}:
\begin{align*}
    (b - \tilde{b})^+(0) &= (b - \tilde{b})^+(\bar{0})\\
    &\leq C(n)\left[\ti{C}^{\;n-1}\left(\int_{B_{1/2}(\bar{0})}|(b - \tilde{b})^+(\bar{x})|^\frac{n}{n-1}d\bar{x} \right)^\frac{n-1}{n} + C\left(\int_{B_{1/2}(\bar{0})}d\bar{x}\right)^\frac{1}{n}\right]\\
    &\leq C(n)\left[\ti{C}^{\;n-1}\left(\int_{B_{1/2}(\bar{0})}|(b - \tilde{b})^+(\bar{x})|^\frac{n}{n-1}dv_{g(\bar{x})} \right)^\frac{n-1}{n} + C\left(\int_{B_{1/2}(\bar{0})}dv_{g(\bar{x})}\right)^\frac{1}{n}\right]\\
    &\leq C(n)\left[\ti{C}^{\;n-1}\left(\int_{B_1(0)}|(b - \tilde{b})^+(x)|^\frac{n}{n-1}dv_{g(x)} \right)^\frac{n-1}{n} + C\left(\int_{B_1(0)}dv_g\right)^\frac{1}{n}\right]
\end{align*}
where $\ti{C}=(1 + \nu_1\csc^2(\delta/2)(1 + ||Du||_{L^\infty(B_{2n+1})}))$ and $C=C(n,\nu_1,\nu_2)(1 + ||Du||^2_{L^\infty(B_{2n+1})})$ is the positive constant from Lemma \ref{ptJ}. 

The above mean value inequality can also be derived using the De Giorgi-Moser iteration \cite[Theorem 8.16]{GT}.

\item[Step 2.] By Proposition \ref{Iso} with $\rho = \rho(2n+1)$ and $\epsilon = \frac{5}{4}$, and Lemma \ref{IntJI}, approximating $b$ by a smooth function, we have
\begin{align}
    b(0)&\leq C(n)\tilde{C}^{\;n-1}\csc(\delta/2)^{2n-2}\rho^{2n-2}\int_{B_{2n}}|\nabla_g(b-\tilde{b})^+|\;dv_g \nonumber \\
   &\qquad + CC(n)\left(\int_{B_{2n}}dv_g \right)^\frac{1}{n} + C(n) \int_{B_{2n}}b(x)dx \nonumber\\
    &\leq C(n)\ti{C}^{\;n-1}\csc(\delta/2)^{2n-2}\rho^{2n-2}\left(\int_{B_{2n}}|\nabla_g b|^2dv_g\right)^\frac{1}{2}\left(\int_{B_{2n}}dv_g\right)^\frac{1}{2} \nonumber\\
    &\qquad+CC(n)\left(\int_{B_{2n}}dv_g\right)^\frac{1}{n}+ C(n)\int_{B_{2n}} dv_g\nonumber\\
    &\leq C(n)(1 + \ti{C}^{\;n-1}(1 + C)^\frac{1}{2})\csc(\delta/2)^{2n-2}\rho^{2n-2}\int_{B_{2n+1}}dv_g+ CC(n)\left(\int_{B_{2n+1}}dv_g\right)^\frac{1}{n}. \label{b0}
\end{align}

\item[Step 3.] We bound the volume element using the rotated coordinates. From \eqref{barg}, and $\bar{\Omega}_{2n+1}=\bar{x}(B_{2n+1}(0))$, we get
\[
\int_{B_{2n+1}}dv_{g(x)} = \int_{\bar{\Omega}_{2n+1}}dv_{g(\bar{x})}\leq \csc^n(\delta/2)\int_{\bar{\Omega}_{2n+1}}d\bar{x}\leq C(n)\csc^n(\delta/2)\rho^n.
\]
Hence, from \eqref{b0}, we have
\begin{align}
    b(0) &\leq C(n)\csc^{3n-2}(\delta/2)(1 + \tilde{C}^{n-1}( 1 + C)^\frac{1}{2})\rho^{3n-2} + CC(n)\csc(\delta/2)\rho\nonumber\\
    &\leq C(n)\csc^{3n-2}(\delta/2)(1 + \tilde{C}^{n-1}( 1 + C)^\frac{1}{2} + C)\rho^{3n-2}\label{b1}.
\end{align}

By plugging in \eqref{rhodr}, $\tilde{C}$, $C$, and using
\[
(a + b)^p \leq 2^p(a^p + b^p),\quad \text{ for } a,b\geq0,p>0,
\]
$\csc(\delta/2)\geq 1$, as well as Young's inequality, we have
\begin{align}
     C(n)&\csc^{3n-2}(\delta/2)(1 + \tilde{C}^{n-1}( 1 + C)^\frac{1}{2} + C)\rho^{3n-2}\nonumber\\
     &\leq C(n,\nu_1,\nu_2)\csc^{5n-4}(\delta/2)(1 +||Du||^{4n-2}_{L^\infty(B_{2n+1})} ).\nonumber
\end{align}
Using the gradient estimate \eqref{gradest} from above, this reduces to
\begin{align}
    C(n,\nu_1,\nu_2)&\csc^{5n-4}(\delta/2)(1 +||Du||^{4n-2}_{L^\infty(B_{2n+1})} )\nonumber\\
    & \leq C(n,\nu_1,\nu_2)\csc^{5n-4}(\delta/2)(1 + (\osc_{B_{2n+2}}(u))^{4n-2} + \cot^{4n-2}(\delta))\nonumber\\
    &\leq C(n,\nu_1,\nu_2)\csc^{9n-6}(\delta/2)(1 + (\osc_{B_{2n+2}}(u))^{4n-2}). \label{b2} 
\end{align}

By combining \eqref{b1} and \eqref{b2} and exponentiating, we get
\[
|D^2u(0)|\leq \exp\big[C_1\csc^{9n-6}(\delta/2)\big]\exp\bigg[C_2\csc^{9n-6}(\delta/2)\big(\osc_{B_{2n+2}}(u)\big)^{4n-2}\bigg]
\]
where $C_1$ and $C_2$ are positive constants depending on $\nu_1,\nu_2$ and $n$.
\end{enumerate}  
Lastly, we deal with the case $n=2$. We fix $\arctan\lambda_3 = \frac{\pi}{2} -\frac{\delta}{2}$ and add $(\pi/2 - \delta/2)$ to both sides of the two dimensional supercritical equation \eqref{slag}:
\[
\arctan\lambda_1 + \arctan\lambda_2 + \arctan\lambda_3 = \Theta(x,u,Du) + \frac{\pi}{2} - \frac{\delta}{2}\geq \frac{\pi}{2} + \frac{\delta}{2}.
\]
This is then the three dimensional supercritical equation \eqref{slag} for which the Hessian estimates hold by the above proof.

\end{proof}

\bibliographystyle{amsalpha}
\bibliography{LMC}

\end{document}